\newcommand\com[1]{#1}
\newtheorem{theorem}{Theorem}[section]
\newtheorem{lemma}[theorem]{Lemma}
\theoremstyle{definition}
\newtheorem{definition}[theorem]{Definition}
\newtheorem{remark}{Remark}
\title[Approx. Friedrichs' syst. convex const.] 
      {Relaxation approximation of Friedrichs' systems under convex constraints}
\author[J.-F. Babadjian, C. Mifsud and N. Seguin]{}
\subjclass{Primary: 35L45, 35L60; Secondary: 35A35.}
 \keywords{Friedrichs' systems, Convex constraints.}
 \email{jean-francois.babadjian@upmc.fr}
 \email{mifsud@ljll.math.upmc.fr}
 \email{seguin@upmc.fr}
\begin{document}
\maketitle

\centerline{\scshape Jean-Fran\c cois Babadjian, Cl{\'e}ment  Mifsud}
\medskip
{\footnotesize
 \centerline{ Sorbonne Universit{\'e}s, UPMC Univ Paris 06, UMR 7598}
   \centerline{Laboratoire Jacques-Louis Lions}
   \centerline{F-75005, Paris, France }
} 

\medskip

\centerline{\scshape Nicolas Seguin}
\medskip
{\footnotesize
\centerline{ Sorbonne Universit{\'e}s, UPMC Univ Paris 06, UMR 7598}
   \centerline{Laboratoire Jacques-Louis Lions}
   \centerline{F-75005, Paris, France }
   \centerline{CNRS, UMR 7598, Laboratoire Jacques-Louis Lions}
   \centerline{F-75005, Paris, France }
   \centerline{INRIA-Paris-Rocquencourt, EPC Ange}
   \centerline{Domaine de Voluceau, BP105, 78153 Le Chesnay Cedex}
}

\bigskip

 \centerline{(Communicated by the associate editor name)}

\begin{abstract}
\com{This paper is devoted to present an approximation of a Cauchy problem for Friedrichs' systems under convex constraints. It is proved the strong convergence in $L^2_{\text{loc}}$ of a parabolic-relaxed approximation towards the unique constrained solution. }
\end{abstract}

\section{Introduction}

The aim of this paper is to prove the convergence of a relaxation approximation \com{of weak solutions to} Friedrichs' systems under \com{convex constraints. The well-posedness has been established in \cite{DLS} by means of a numerical scheme. We present here} another way to construct such weak solutions thanks to a model that relaxes the constraints. We consider the following Cauchy problem: \com{find $W:[0,T] \times \mathbb R^n \to \mathbb R^m$ such that}

\begin{equation}
\label{SystNonBorn}
\begin{cases}
\partial_t W + \sum_{j=1}^n B_j \partial_j W = 0& \text{in } ]0,T]\times \mathbb{R}^n, \\
W(t,x)\in K& \text{if } (t,x)\in [0,T]\times \mathbb{R}^n, \\
W(0,x)=W^0(x)&\text{if } x\in \mathbb{R}^n,
\end{cases}
\end{equation}
where \com{$K$ is a fixed (\textit{i.e.} independent of the time and space variables) non empty closed and convex subset of $\mathbb R^m$} containing $0$ in its interior, the matrices $B_j$ \com{are $m \times m$} symmetric matrices independent of time and space, and $T>0$.
\com{The main difficulty is due to the constraints which introduce nonlinear effects to the linear Friedrichs' system~\cite{Fried}.} This type of hyperbolic problems has been introduced in~\cite{DLS} \com{where a notion of weak solutions to  problem~\ref{SystNonBorn} has been defined}.
\begin{definition}
\label{defsolsystnonborn}
\com{Let $W^0\in L^2(\mathbb{R}^n,K)$, and $T>0$.} A function $W$ is a weak constrained solution of~\ref{SystNonBorn} if $W\in L^2([0,T]\times \mathbb{R}^n,K)$ satisfies
\begin{multline}
\label{inequadefsolsystnonborn}
\int_0^T\int_{\mathbb{R}^n} \Big( |W-\kappa|^2\partial_t \phi + \sum_{j=1}^n \Braket{W-\kappa; B_j(W-\kappa)}\partial_j \phi \Big) \, d t \, dx \\
\hspace{6.5cm}+ \int_{\mathbb{R}^n} |W^0(x)-\kappa|^2\phi(0,x)\, d x \ge 0,
\end{multline}
for all  $\kappa \in K$ and \com{$\phi \in \mathcal{C}^{\infty}_c([0,T[\times \mathbb{R}^n)$ with $\phi(t,x)\ge 0$ for all $(t,x)\in [0,T[\times \mathbb{R}^n$.}
\end{definition}
We recall here the main result of~\cite{DLS}.

\begin{theorem}
Assume that $W^0\in L^2(\mathbb{R}^n,K)$. There exists a unique weak constrained solution $W\in L^2([0,T]\times \mathbb{R}^n,K)$ to~\ref{SystNonBorn} in the sense of Definition~\ref{defsolsystnonborn}. In addition, this solution belongs to $\mathcal{C}([0,T],L^2(\mathbb{R}^n,K))$, and if \com{further} $W^0 \in H^1(\mathbb{R}^n,K)$, then $W\in L^\infty([0,T],H^1(\mathbb{R}^n,K))$.
\end{theorem}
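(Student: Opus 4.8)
The plan is to establish uniqueness first, since the contraction estimate it produces also powers the existence and continuity claims. Given two weak constrained solutions $W_1,W_2\in L^2([0,T]\times\mathbb R^n,K)$ sharing the same datum, I would run a Kruzhkov-type doubling of variables. In \ref{inequadefsolsystnonborn} written for $W_1$ in the variables $(t,x)$ one takes the admissible constant $\kappa=W_2(s,y)$ --- legitimate because $K$ is closed and convex and $W_2$ is $K$-valued, although the mere $L^2$ regularity forces a preliminary mollification of $W_2$ and an argument at Lebesgue points --- tested against $\phi(t,x,s,y)=\theta(\tfrac{t+s}{2})\,\eta(x)\,\rho_\delta(t-s)\rho_\delta(x-y)$ with $\theta,\eta,\rho_\delta\ge 0$, and symmetrically for $W_2$ with $\kappa=W_1(t,x)$. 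Adding the two inequalities, the quadratic densities pair across the diagonal and, letting $\delta\to 0$, one is left with
\[
\int_0^T\!\!\int_{\mathbb R^n}\Big(|W_1-W_2|^2\,\theta'\eta+\sum_{j=1}^n\Braket{W_1-W_2; B_j(W_1-W_2)}\,\theta\,\partial_j\eta\Big)\,dt\,dx+\theta(0)\int_{\mathbb R^n}|W_1^0-W_2^0|^2\,\eta\,dx\ge 0.
\]
Choosing $\eta$ to approximate the indicator of the backward cone of slope $\Lambda:=\max_{|\xi|=1}\|\sum_{j=1}^n\xi_jB_j\|$ (finite speed of propagation) kills the flux term, and a Gronwall argument on $t\mapsto\int_{B_{R-\Lambda t}}|W_1-W_2|^2\,dx$ gives $W_1\equiv W_2$; keeping general data, the same computation produces the cone contraction $\int_{B_{R-\Lambda t}}|W_1-W_2|^2(t,\cdot)\,dx\le\int_{B_R}|W_1^0-W_2^0|^2\,dx$ and, letting $R\to\infty$, a global $L^2(\mathbb R^n)$ contraction.

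\emph{Existence and a priori bounds.} For the existence I would use the parabolic-relaxed problem the paper studies,
\[
\partial_t\eps W+\sum_{j=1}^n B_j\partial_j\eps W-\ep\,\Delta\eps W+\tfrac1\ep\big(\eps W-P_K(\eps W)\big)=0,\qquad \eps W(0,\cdot)=W^0,
\]
with $P_K$ the $1$-Lipschitz projection onto $K$; for fixed $\ep$ this is a semilinear parabolic system with Lipschitz nonlinearity, hence classically well posed. Testing with $\eps W$ and using $\Braket{\eps W-P_K(\eps W); \eps W-\kappa}\ge 0$ for all $\kappa\in K$ yields $\|\eps W\|_{L^\infty_tL^2_x}\le\|W^0\|_{L^2}$, $\sqrt\ep\,\|\nabla\eps W\|_{L^2_{t,x}}\le C$ and $\tfrac1\ep\|\eps W-P_K(\eps W)\|_{L^2_{t,x}}^2\le C$; because $0\in\mathrm{int}\,K$, taking $\kappa=-re$ with $B_r(0)\subset K$ upgrades the last bound to a uniform control of the stiff term $\tfrac1\ep(\eps W-P_K(\eps W))$ in $L^1([0,T]\times\mathbb R^n)$, and a weighted version of the same energy identity shows in addition that no $L^2$-mass escapes to infinity, uniformly in $\ep$. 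Multiplying the equation by $2(\eps W-\kappa)$ and integrating against $\phi\ge 0$ gives, after discarding the favourably signed viscous and penalization contributions, inequality \ref{inequadefsolsystnonborn} up to an error $-\ep\int_0^T\!\int_{\mathbb R^n}|\eps W-\kappa|^2\Delta\phi\to 0$, while $\tfrac1\ep\|\eps W-P_K(\eps W)\|_{L^2_{t,x}}^2\le C$ forces any $L^2_{\mathrm{loc}}$-limit of $\eps W$ to be $K$-valued.

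\emph{Compactness, passage to the limit, regularity.} The remaining point is strong $L^2_{\mathrm{loc}}$ compactness, needed because the entropy and flux densities are genuinely quadratic. For $W^0\in H^1(\mathbb R^n,K)$ it is at hand: differentiating the equation and testing with $\partial_i\eps W$, the penalization contributes a nonnegative term by convexity of $z\mapsto\tfrac12 d(z,K)^2$, so $\|\eps W(t)\|_{H^1}\le\|W^0\|_{H^1}$ uniformly; together with $\partial_t\eps W$ bounded in $L^1_tW^{-1,q}_{x,\mathrm{loc}}$ for $q<n/(n-1)$ (from the $L^1$ bound on the stiff term), Aubin--Lions--Simon yields strong $L^2_{\mathrm{loc}}$ compactness, and passing to the limit produces a weak constrained solution lying in $L^\infty([0,T],H^1(\mathbb R^n,K))$; the uniform no-escape-of-mass bound promotes the convergence to $C([0,T];L^2(\mathbb R^n))$, so this solution belongs to $\mathcal C([0,T],L^2(\mathbb R^n,K))$ as well. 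For a general datum $W^0\in L^2(\mathbb R^n,K)$ I would approximate it in $L^2$ by $W^0_k\in H^1(\mathbb R^n,K)$ (truncating then mollifying keeps the values in $K$ since $0\in K$ and $K$ is convex) and pass to the limit in the associated solutions via the global $L^2$ contraction, which simultaneously transmits the $\mathcal C([0,T],L^2)$-regularity.

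\emph{Main obstacle.} The hard part is exactly this compactness step: weak $L^2$ convergence does not survive the quadratic entropy and flux densities, so one must either invest in the $H^1$ estimate --- and it is precisely there that the convexity of $d(\cdot,K)^2$, the $1$-Lipschitz character of $P_K$, and the standing hypothesis $0\in\mathrm{int}\,K$ (through the $L^1$ control of the relaxation term, hence of $\partial_t\eps W$) are used --- or run a compensated-compactness argument; a secondary technical nuisance is the Lebesgue-point bookkeeping in the doubling of variables, imposed by the low regularity of the solutions, and the fact that the strong (as opposed to weak) time-continuity has to be argued through the monotonicity of $t\mapsto\|W(t)\|_{L^2}$ and a restarting argument based on uniqueness.
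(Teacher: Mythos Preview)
This theorem is \emph{not} proved in the present paper: it is explicitly recalled from \cite{DLS}, and the paper's own contribution is an alternative construction of the solution via the relaxation approximation \ref{ApproxSystNonBorn}. So there is no ``paper's proof'' of this statement to compare to in the strict sense; the relevant comparison is between your outline and (i) the doubling-of-variables uniqueness argument of \cite{DLS} that the paper cites, and (ii) the relaxation construction that the paper actually carries out in Sections~2--4.

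Your uniqueness sketch is the Kruzhkov doubling argument, which is exactly the route taken in \cite{DLS}; the paper simply quotes this (``see \cite[Lemma 9]{DLS}''). For existence, your route is correct but organised differently from the paper. You couple viscosity and penalization with a single parameter $\ep$, whereas the paper decouples them ($\eta$ for the parabolic regularisation, $\epsilon$ for the relaxation) and sends $\eta\to 0$ first to obtain the purely hyperbolic relaxed system \ref{ApproxSystNonBorn}, and only then $\epsilon\to 0$. More substantively, your $H^1$ estimate is uniform in $\ep$ because you exploit the monotonicity of $I-P_K=\nabla\big(\tfrac12 d(\cdot,K)^2\big)$, which makes the differentiated penalization term have a favourable sign; the paper instead uses only the Lipschitz bound of Lemma~\ref{lemme_proj_H1} and obtains an $H^1$ estimate that blows up like $\exp(2TM/\epsilon)$ (see \ref{estim_theo_2}). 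Consequently you can close compactness by Aubin--Lions--Simon directly, while the paper, lacking a uniform $H^1$ bound on $W_\epsilon$, argues compactness via Theorem~\ref{theo_simons} together with Riesz--Fr\'echet--Kolmogorov, using cone-shaped test functions and finite speed of propagation to control space and time translates (the computations around \ref{ineq_diff_esp}--\ref{translatee_en_temps}). Your $L^1$ control of $\tfrac1\ep(W_\ep-P_K(W_\ep))$ via $0\in\mathrm{int}\,K$ is correct (indeed $\langle z-P_K(z);P_K(z)\rangle\ge r|z-P_K(z)|$ whenever $B_r(0)\subset K$) and gives a cleaner bound on $\partial_t W_\ep$ than the paper seeks; the paper never needs this because its translation-in-time estimate comes from the entropy inequality itself. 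Both approaches reduce the $L^2$ initial datum to $H^1$ by mollification and the cone contraction estimate (Theorem~\ref{regularite_systnonborn}, again quoted from \cite{DLS}).
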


\com{As already mentioned, the well-posedness of problem~\ref{SystNonBorn} has been established in~\cite{DLS}} thanks to a numerical method. \com{We relax here }the constraints $W(t,x)\in K$ for a.e.  $(t,x)\in ]0,T[\times \mathbb{R}^n$ as
\begin{equation}
\label{relaxed_eq_intro}
\partial_t W_\epsilon +\sum_{j=1}^n B_j \partial_{x_j} W_\epsilon = \frac{P_K(W_\epsilon)-W_\epsilon}{\epsilon},
\end{equation}
where $P_K$ \com{denotes the orthogonal }projection onto the closed convex \com{set $K$, and $\epsilon>0$ is a small parameter}. Formally, if we multiply equation~\ref{relaxed_eq_intro} by $\epsilon$, and let $\epsilon$ tend to $0$, we get that the ``limit'' of $W_\epsilon$, denoted by $W$, satisfies $P_K(W)=W$, which ensures that $W\in K$. \com{In addition, Definition~\ref{defsolsystnonborn} has been motivated in~\cite{DLS}  by a formal derivation from the relaxation system~\ref{relaxed_eq_intro}.} To see it, it suffices to take the scalar product \com{ of equation of \eqref{relaxed_eq_intro} with $W_\epsilon-\kappa$, where $\kappa\in K$ is arbitrary. We then use the first order characterization of the projection which ensures that the right hand side is non-positive, }to get the inequality of Definition~\ref{defsolsystnonborn}. The purpose of this work is to \com{rigorously} justify these formal steps.

\com{ The relaxation model presented here is very similar to viscous approximation of constrained models found in mechanics, and especially in plasticity. We start from the system of dynamical linear elasticity in three space dimension which can be written as
\begin{equation}
\left\{
\begin{array}{rcl}
\partial_t F + \nabla_x v &=& 0,\\
\partial_t v + \text{div } \sigma &=& 0,
\end{array}
\right.
\end{equation}
for all $t\in [0,T]$ and $x\in \mathbb{R}^3$. In the previous system, $F(t,x)$ is a $3\times 3$ matrix which stands for the displacement gradient, $v(t,x)\in \mathbb{R}^3$ is the velocity, {\it i.e.} the displacement time derivative, and $\sigma = \mu \left( F+F^T \right) + \lambda \left( \text{tr } F \right) I_3$ is the symmetric Cauchy stress tensor (here $\lambda$ and $\mu$ are \textit{the Lam{\'e} coefficients}, and $I_3$ is the idendity matrix in $\mathbb R^3$). This system can be rewritten (thanks to a change of variables - see~\cite{SerreMorando3D}) in the Friedrichs' framework as
$$\partial_t U + A_1 \partial_{x_1} U + A_2 \partial_{x_2} U  + A_3 \partial_{x_3} U = 0,$$
where $U$ is a vector in $\mathbb R^9$ \com{(containing the three compononents of the velocity $v$, and the six components of the stress $\sigma$)} and $A_1,A_2$ and $A_3$ are symmetric matrices. 

We now introduce the convex contraint coming from plasticity, see~\cite{SuquetDyn}. Indeed, the theory of perfect plasticity is characterized by the fact that the stress tensor $\sigma$ is constrained to stay inside a fixed closed convex set $K$ of symmetric $3 \times 3$ matrices. The total strain is then additively decomposed as the sum of (i) the elastic strain, denoted by $e$, which is still related to the stress by the linear relation $\sigma=\lambda {\rm tr} e + 2\mu e$; (ii) and the plastic strain, denoted by $p$, whose rate is oriented in a normal direction to $K$ at $\sigma$. Summarizing, one has
\begin{equation}\label{perfect_plasticity_constraint}
\frac{F+F^T}{2}=e+p, \; \sigma=\lambda {\rm tr} e + 2\mu e \in K,\; \partial_t p \in \partial I_K(\sigma),
\end{equation}
where $\partial I_K(\sigma)$ denotes the subdifferential of $I_K$, the indicator function of $K$, at the point $\sigma$. Using Fenchel-Moreau regularization of $I_K$ (see~\cite{Moreau}), the last condition in \ref{perfect_plasticity_constraint} can be relaxed as
$$\partial_t p = \frac{1}{\epsilon} \left( P_K(\sigma)-\sigma \right),$$
where $\epsilon>0$ is a viscosity parameter. We can now reformulate, at least formally, the dynamical problem of visco-plasticity (see~\cite{NouriRascle,SuquetDyn}) as 
\begin{equation}
\label{Transformation_plasticity_friedrichs}
\begin{cases}
\displaystyle \partial_t U+ A_1 \partial_{x_1} U + A_2 \partial_{x_2} U  + A_3 \partial_{x_3} U = \frac{P_{\tilde K}(U)-U}{\epsilon}& \text{on } ]0,T]\times \mathbb{R}^3, \\
U(t,x)\in \tilde{K}& \text{if } (t,x)\in [0,T]\times \mathbb{R}^3, \\
U(0,x)=U^0(x)&\text{if } x\in \mathbb{R}^3,
\end{cases}
\end{equation}
where, again, $U \in \mathbb R^9$, and $\tilde{K} = \left\{ u\in \mathbb{R}^9: \sigma \in K \right\}$, and $A_1$, $A_2$ and $A_3$ are the same matrices than in the elasto-dynamic case. As $\epsilon$ tends to zero, one expects the solution to \ref{Transformation_plasticity_friedrichs} to converge to that of the model of perfect plasticity (see \cite{Suquet} in the quasistatic case).}

%
%

\medskip

\noindent {\bf Notation.} In the sequel, we denote by $\Braket{|}$ the scalar product of $L^2(\mathbb{R}^n,\mathbb{R}^m)$ and by $\Braket{;}$ the canonical scalar product of $\mathbb{R}^m$ (and $|.|$ the associated norm). Also, to shorten notation, we write $L^2_{t,x}$ (resp. $H^1_{t,x}$) instead of $L^2(0,T;L^2(\mathbb{R}^n,\mathbb{R}^m))$ (resp. $H^1((0,T)\times \mathbb{R}^n,\mathbb{R}^m)$), $L^2_x$ instead of $L^2(\mathbb{R}^n,\mathbb{R}^m)$.

\medskip

This paper is organized as follows. In the first section, we establish the existence and uniqueness of the relaxation model thanks to a \com{parabolic approximation}. In the second section, we prove that the relaxed solution $W_\epsilon$ to~\ref{relaxed_eq_intro} satisfies the inequalities of Definition~\ref{defsolsystnonborn}. Finally, to get the existence of a solution as a limit when $\epsilon$ tends to zero of relaxed solutions $W_\epsilon$, we prove the strong convergence of the sequence $(W_\epsilon)_{\epsilon>0}$ in the space $L^2((0,T)\times \omega)$, where $\omega$ is a open bounded subset of \com{$\mathbb R^n$}, to a weak solution to the contrained Friedrichs' systems.

\section{Parabolic approximation}
In order to find a solution to the relaxation problem~\ref{relaxed_eq_intro}, \com{we use a parabolic type regularization. To this aim, we consider a classical sequence of mollifiers in $\mathbb R^n$, denoted by $(\rho_\eta)_{\eta>0}$.}
\begin{theorem}
\label{theoWepseta}
\com{ Let $W^0\in H^1(\mathbb{R}^n,K)$. For every $\epsilon>0$ and $\eta>0$, }the system
\begin{equation}
\label{eq_visco_non_borne}
\begin{cases}
\displaystyle \partial_t W_{\epsilon,\eta} - \eta \Delta W_{\epsilon,\eta} + \sum_{j=1}^n B_j \partial_j W_{\epsilon,\eta} = \frac{P_K(W_{\epsilon,\eta})-W_{\epsilon,\eta}}{\epsilon},& \text{on } ]0,T]\times \mathbb{R}^n, \\
W_{\epsilon,\eta}(0,x)=W^0(x)* \rho_\eta,&\text{if } x\in \mathbb{R}^n,
\end{cases}
\end{equation}
admits \com{a unique solution $W_{\epsilon,\eta}$  with the following properties:}
\[
W_{\epsilon,\eta}\in L^2(0,T;H^2(\mathbb{R}^n,\mathbb{R}^m)), \quad \partial_t W_{\epsilon,\eta} \in L^2(0,T;H^1(\mathbb{R}^n,\mathbb{R}^m)),
\] 
and 
$$\partial_{tt} W_{\epsilon,\eta} \in L^2(0,T;H^{-1}(\mathbb{R}^n,\mathbb{R}^m)).$$
Furthermore, we have the following estimates
\com{\begin{eqnarray}
\sup_{0\le t \le T} \left\| W_{\epsilon,\eta}(t) \right\|_{L^2_x}^2 & \le & \left\| W^0 \right\|_{L^2_x}^2,\label{estim_theo_1}\\
\sup_{0\le t \le T} \left\| W_{\epsilon,\eta} (t)\right\|_{H^1_x} & \le & C_\epsilon \left\| W^0 \right\|_{H^1_x},\label{estim_theo_2}\\
\sup_{0\le t \le T} \left\| \partial_t W_{\epsilon,\eta} (t)\right\|_{L^2_x} & \le & C_{\epsilon} \left\| W^0 \right\|_{H^1_x},\label{estim_3}
\end{eqnarray}
for some constant $C_\epsilon>0$ independent of $\eta$.}
\end{theorem}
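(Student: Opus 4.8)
The plan is to construct $W_{\epsilon,\eta}$ by a fixed point argument on the semilinear parabolic equation \eqref{eq_visco_non_borne}, then to upgrade regularity and derive the uniform estimates via energy methods. First I would rewrite \eqref{eq_visco_non_borne} in mild (Duhamel) form using the semigroup generated by the constant-coefficient operator $\eta\Delta - \sum_j B_j\partial_j$ on $L^2(\mathbb{R}^n,\mathbb{R}^m)$; since the $B_j$ are symmetric, the first-order part is skew-adjoint after Fourier transform and the full operator generates an analytic semigroup $(S_\eta(t))_{t\ge 0}$ with the usual smoothing estimates $\|S_\eta(t)\|_{\mathcal L(L^2_x)}\le 1$ and $\|S_\eta(t)\|_{\mathcal L(L^2_x,H^1_x)}\le C(\eta t)^{-1/2}$. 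The key structural fact is that $w\mapsto (P_K(w)-w)/\epsilon$ is globally Lipschitz from $L^2_x$ to $L^2_x$ with constant $2/\epsilon$ (because $P_K$ is $1$-Lipschitz on $\mathbb{R}^m$, being a projection onto a convex set), and it maps $0$ to $0$. A standard Banach fixed point argument in $\mathcal C([0,\tau];L^2_x)$ for $\tau$ small, then iterated on $[0,T]$ since the Lipschitz constant is global, gives a unique mild solution. Bootstrapping through the parabolic smoothing estimates and using that the initial datum $W^0*\rho_\eta$ is smooth and in every $H^s_x$ yields $W_{\epsilon,\eta}\in L^2(0,T;H^2_x)$, $\partial_t W_{\epsilon,\eta}\in L^2(0,T;H^1_x)$, and then $\partial_{tt}W_{\epsilon,\eta}\in L^2(0,T;H^{-1}_x)$ by differentiating the equation in time.

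Next I would establish the energy estimates, working first on smooth solutions (justified by the regularity just obtained, or by a further mollification/truncation argument). For \eqref{estim_theo_1}: take the $L^2_x$ scalar product of \eqref{eq_visco_non_borne} with $W_{\epsilon,\eta}(t)$. The term $-\eta\langle \Delta W_{\epsilon,\eta}\,|\,W_{\epsilon,\eta}\rangle = \eta\|\nabla W_{\epsilon,\eta}\|_{L^2_x}^2 \ge 0$; the convection term $\sum_j\langle B_j\partial_j W_{\epsilon,\eta}\,|\,W_{\epsilon,\eta}\rangle = \tfrac12\sum_j\int \partial_j\langle W_{\epsilon,\eta};B_j W_{\epsilon,\eta}\rangle\,dx = 0$ by symmetry of $B_j$ and integration by parts; and the relaxation term satisfies $\langle P_K(W_{\epsilon,\eta})-W_{\epsilon,\eta}\,|\,W_{\epsilon,\eta}-0\rangle \le 0$ by the first-order characterization of the projection (taking $\kappa=0\in K$). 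Hence $\tfrac{d}{dt}\tfrac12\|W_{\epsilon,\eta}\|_{L^2_x}^2\le 0$, and since $\|W^0*\rho_\eta\|_{L^2_x}\le\|W^0\|_{L^2_x}$ by Young's inequality, \eqref{estim_theo_1} follows. For \eqref{estim_theo_2}, I would differentiate the equation in $x$ (applying $\partial_k$ for each $k$) and test against $\partial_k W_{\epsilon,\eta}$: the viscous and convective terms are handled as before, while the right-hand side becomes $\tfrac1\epsilon\langle \partial_k(P_K(W_{\epsilon,\eta})-W_{\epsilon,\eta})\,|\,\partial_k W_{\epsilon,\eta}\rangle$, which is bounded above by $\tfrac2\epsilon\|\partial_k W_{\epsilon,\eta}\|_{L^2_x}^2$ using the Lipschitz bound on $P_K$ (here one does not get a favourable sign, only the Lipschitz estimate, which accounts for the $\epsilon$-dependence of $C_\epsilon$). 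Summing over $k$ and using Grönwall with $\|W^0*\rho_\eta\|_{H^1_x}\le\|W^0\|_{H^1_x}$ gives \eqref{estim_theo_2} with $C_\epsilon = e^{2T/\epsilon}$ or similar. For \eqref{estim_3}, I would either differentiate the equation in time and test against $\partial_t W_{\epsilon,\eta}$, controlling the time-differentiated relaxation term again by $\tfrac2\epsilon\|\partial_t W_{\epsilon,\eta}\|_{L^2_x}^2$, or more directly bound $\|\partial_t W_{\epsilon,\eta}(0)\|_{L^2_x} = \|\eta\Delta(W^0*\rho_\eta) - \sum_j B_j\partial_j(W^0*\rho_\eta) + \tfrac1\epsilon(P_K(W^0*\rho_\eta)-W^0*\rho_\eta)\|_{L^2_x}$ and propagate it — but since the $\eta\Delta$ contribution at $t=0$ is not uniformly bounded in $\eta$ (it involves $\|W^0*\rho_\eta\|_{H^2_x}$), I expect the cleaner route is the time-differentiated energy estimate combined with \eqref{estim_theo_1}, \eqref{estim_theo_2} and the equation evaluated in an averaged sense near $t=0$; one uses that $\partial_t W_{\epsilon,\eta} = \eta\Delta W_{\epsilon,\eta} - \sum_j B_j\partial_j W_{\epsilon,\eta} + \tfrac1\epsilon(P_K(W_{\epsilon,\eta})-W_{\epsilon,\eta})$ together with parabolic regularity in the $H^1_x$-in-space scale.

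The main obstacle I anticipate is making the energy estimates rigorous rather than formal: the fixed-point construction only delivers a mild solution with the stated regularity, and to test the equation against $\Delta$-type quantities one needs the solution to be regular enough for all integrations by parts to be legitimate and for the chain rule $\tfrac{d}{dt}\|W_{\epsilon,\eta}\|^2 = 2\langle\partial_t W_{\epsilon,\eta}\,|\,W_{\epsilon,\eta}\rangle$ to hold. This is resolved by the standard Lions–Magenes type argument: with $W_{\epsilon,\eta}\in L^2(0,T;H^2_x)$ and $\partial_t W_{\epsilon,\eta}\in L^2(0,T;H^1_x)$, the pairing $\langle\partial_t W_{\epsilon,\eta}, W_{\epsilon,\eta}\rangle$ is well-defined and the energy identity holds in the sense of distributions in $t$ (so as an equality of absolutely continuous functions); similarly for the $H^1_x$-level estimate one works with the pairing $H^{-1}_x$–$H^1_x$. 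A secondary subtlety is the non-smoothness of $P_K$: $w\mapsto P_K(w)$ is only Lipschitz, not $C^1$, so $\partial_k(P_K(W_{\epsilon,\eta}))$ must be interpreted via the chain rule for Lipschitz functions composed with $H^1_x$ maps (Stampacchia), which is valid and preserves the pointwise bound $|\partial_k P_K(W_{\epsilon,\eta})|\le|\partial_k W_{\epsilon,\eta}|$ a.e.; this is exactly what is needed for the $H^1_x$ estimate. Everything else — uniqueness (from the global Lipschitz bound and Grönwall), and the $\partial_{tt}$ regularity (read off from the time-differentiated equation) — is routine.
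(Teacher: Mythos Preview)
Your existence argument and the estimates \eqref{estim_theo_1}, \eqref{estim_theo_2} are essentially the same as the paper's (the paper uses a fixed point in $L^\infty_t H^1_x$ with the transport term also placed on the right-hand side of a linear heat equation, rather than your semigroup formulation, but this is a cosmetic difference). The gap is in your treatment of \eqref{estim_3}.

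You correctly identify that the time-differentiated energy inequality gives only
\[
\frac{d}{dt}\|\partial_t W_{\epsilon,\eta}(t)\|_{L^2_x}^2 \le \frac{2M}{\epsilon}\|\partial_t W_{\epsilon,\eta}(t)\|_{L^2_x}^2,
\]
so Gr\"onwall requires a bound on $\|\partial_t W_{\epsilon,\eta}(0)\|_{L^2_x}$ that is independent of $\eta$. You then explicitly reject the direct computation of this quantity on the grounds that the term $\eta\Delta(W^0*\rho_\eta)$ ``is not uniformly bounded in $\eta$ (it involves $\|W^0*\rho_\eta\|_{H^2_x}$)''. This is precisely the point you are missing: the mollification scale has been \emph{chosen equal to the viscosity parameter} $\eta$, and this is what saves the estimate. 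Indeed, writing $\Delta(W^0*\rho_\eta)=\nabla W^0 * \nabla\rho_\eta$ and using Young's inequality together with the standard mollifier bound $\|\nabla\rho_\eta\|_{L^1(\mathbb{R}^n)}\le C/\eta$, one gets
\[
\eta\,\|\Delta(W^0*\rho_\eta)\|_{L^2_x}\le \eta\,\|\nabla W^0\|_{L^2_x}\,\|\nabla\rho_\eta\|_{L^1}\le C\,\|\nabla W^0\|_{L^2_x},
\]
which \emph{is} uniform in $\eta$. The remaining pieces of $\partial_t W_{\epsilon,\eta}(0)$ are $\sum_j B_j\partial_j(W^0*\rho_\eta)$, bounded by $C\|W^0\|_{H^1_x}$, and $\epsilon^{-1}(P_K(W^0*\rho_\eta)-W^0*\rho_\eta)$, which in fact vanishes since $W^0$ takes values in the convex set $K$ and mollification preserves this (or, more crudely, is bounded by $\epsilon^{-1}\|W^0\|_{L^2_x}$ using $0\in K$). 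This is exactly the route the paper takes. Your proposed alternative (``the equation evaluated in an averaged sense near $t=0$'', or reading $\partial_t W$ off the equation via $\eta\Delta W$ at later times) does not circumvent the issue: you would still need a uniform-in-$\eta$ bound on $\eta\|\Delta W_{\epsilon,\eta}(t)\|_{L^2_x}$ pointwise in $t$, which none of your other estimates provide.
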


\begin{proof}
The proof essentially follows that of Theorem 1, Part II, Section 7.3.2 in \cite{Evans}. Let $X = L^\infty(0,T;H^1(\mathbb{R}^n,\mathbb{R}^m))$ and $V\in X$. We consider the problem
\begin{equation}
\label{Visco_pt_fixe}
\begin{cases}
\displaystyle \partial_t U - \eta \Delta U  = \frac{P_K(V)-V}{\epsilon}- \sum_{j=1}^n B_j \partial_j V,& \text{on } ]0,T]\times \mathbb{R}^n, \\
U(0,x)=W^0(x)* \rho_\eta,&\text{if } x\in \mathbb{R}^n.
\end{cases}
\end{equation}
Since $0\in K$, we have the following inequality
\[
\forall k\in \mathbb{R}^m,\qquad |(P_K(k)-k)| \le |k|
\]
which shows that $\frac{P_K(V)-V}{\epsilon}- \sum_{j=1}^n B_j \partial_j V \in L^2(0,T;L^2(\mathbb{R}^n,\mathbb{R}^m))$. Using the theory of parabolic equations, we get that equation~\ref{Visco_pt_fixe} admits a unique solution $U$ with $U\in L^2(0,T;H^2(\mathbb{R}^n,\mathbb{R}^m))$ and $\partial_t U \in L^2(0,T;L^2(\mathbb{R}^n,\mathbb{R}^m))$. 
Let $\tilde{V}\in X$ and $\tilde{U}$ be the solution to~\ref{Visco_pt_fixe} associated with $\tilde{V}$. The function $\hat{U}=U-\tilde{U}$ is a solution to
\begin{equation}
\begin{cases}
\displaystyle \partial_t \hat{U} - \eta \Delta \hat{U}  = \frac{P_K(V)-P_K(\tilde{V)}-(V-\tilde{V})}{\epsilon}- \sum_{j=1}^n B_j \partial_j (V-\tilde{V}),& \text{on }]0,T]\times \mathbb{R}^n, \\
\hat{U}(0,x)=0,&\text{if } x\in \mathbb{R}^n.
\end{cases}
\end{equation}
Thanks to the theory of parabolic equations, we have the following estimate, we obtain the following estimate 
\begin{multline*}
\underset{0\le t \le T}{\text{ess-sup}} \left\| \hat{U}(t) \right\|_{H^1_x} \\
\le  C(\eta) \left\| \frac{P_K(V)-P_K(\tilde{V)}-(V-\tilde{V})}{\epsilon}- \sum_{j=1}^n B_j \partial_j (V-\tilde{V}) \right\|_{L^2_{t,x}} \\
\le  C(\eta)\max\left(\frac{2}{\epsilon}, \left\|B_j \right\| \right) \left\|V-\tilde{V} \right\|_{L^2_t H^1_x} \\
\le  C(\eta)\max\left(\frac{2}{\epsilon}, \left\|B_j \right\| \right) \sqrt{T} \underset{0\le t \le T}{\text{ess-sup}} \left\|V(t)-\tilde{V}(t) \right\|_{H^1_x}.
\end{multline*}
\com{Therefore, the mapping} 
\[
\psi : \begin{cases}
X \to \left\{ U\in L^2_t H^2_x \text{ and }\partial_t U \in L^2_{t,x}  \right\}\subset X \\
\hat{V} \mapsto \hat{U}
\end{cases}
\]
\com{is Lipschitz-continuous with Lipschitz constant bounded by $C(\eta)\max\left(\frac{2}{\epsilon}, \left\|B_j \right\| \right) \sqrt{T}$.}

We now divide $[0,T]$ into sub-intervals $[0,T_1],[T_1,2T_1],[2T_1,3T_1],\ldots,[NT_1,T]$ such that 
\[
C(\eta)\max\left(\frac{2}{\epsilon}, \left\|B_j \right\| \right) \sqrt{\max(T_1,T-NT_1)}<1.
\]
\com{For every $i \in \{0,\ldots N-1\}$, the Banach fixed-point Theorem \com{ensures the existence and uniqueness of a} solution to~\ref{eq_visco_non_borne} on the interval  $[iT_1,(i+1)T_1]$ (with initial condition $W^0* \rho_\eta$ if $i=0$, and $W_{\epsilon,\eta}(iT_1)$ if $i \geq 1$ obtained at the previous step). We then obtain a solution $W_{\epsilon,\eta}$ on the entire interval $[0,T]$ by gluing the solutions on each sub-intervals, so that $W_{\epsilon,\eta} \in L^2(0,T;H^2(\mathbb{R}^n,\mathbb{R}^m))$. According to the initial condition on each sub-intervals, the function $t \mapsto W_{\epsilon,\eta}(t)$ is continuous in $L^2_x$ at every $t=iT_1$, so that $\partial_t W_{\epsilon,\eta} \in L^2(0,T;L^2(\mathbb{R}^n,\mathbb{R}^m))$, and, in particular $W_{\epsilon,\eta}\in H^1(]0,T[\times \mathbb{R}^n,\mathbb{R}^m)$.}

To obtain the announced regularity, we use the following result (whose proof \com{relies on the chain rule in Sobolev spaces}). 

\begin{lemma}\label{sob}
\label{lemme_proj_H1}
Let $U \in H^1(]0,T[\times\mathbb{R}^n,\mathbb{R}^m)$. Then the function $P_K \circ U - U$ also belongs to $H^1(]0,T[\times\mathbb{R}^n,\mathbb{R}^m)$, and there exists a constant $M>0$, \com{independent of $U$,} such that
\[
\left\| P_K \circ U - U \right\|_{H^1_{t,x}} \le M\left\| U \right\|_{H^1_{t,x}}.
\]
\end{lemma}

This result \com{ensures} that $\frac{P_K(W_{\epsilon,\eta})-W_{\epsilon,\eta}}{\epsilon} \in H^1(]0,T[\times \mathbb{R}^n,\mathbb{R}^m)$ and then, \com{using again} to the regularity theory of parabolic equations, we obtain that \[
W_{\epsilon,\eta}\in L^2(0,T;H^2(\mathbb{R}^n,\mathbb{R}^m)),\qquad \partial_t W_{\epsilon,\eta} \in L^2(0,T;H^1(\mathbb{R}^n,\mathbb{R}^m)),
\] 
and 
\[
\partial_{tt} W_{\epsilon,\eta} \in L^2(0,T;H^{-1}(\mathbb{R}^n,\mathbb{R}^m)).
\]

Now we derive the estimates. We are going to use the \com{following result (see~\cite{Evans}).}
\begin{lemma}
\label{lem_der}
Let $U \in L^2(0,T,H^1(\mathbb{R}^n,\mathbb{R}^m))$ with $\partial_t U \in L^2(0,T;H^{-1}(\mathbb{R}^n,\mathbb{R}^m))$. Then, the function
\[
t\mapsto \left\| U(t) \right\|_{L^2_x}^2,
\]
is absolutely continuous, \com{ and for a.e. $t \in [0,T]$,}
\[
\frac{d}{d t} \left(  \frac{1}{2} \left\| U(t) \right\|_{L^2_x}^2 \right) = \Braket{U(t) , \partial_t U(t)}_{H^1_x,H^{-1}_x}.
\]
\end{lemma}
\com{Applying this result to $W_{\epsilon,\eta}$, we get that for a.e. $t \in [0,T]$,}
\begin{multline}\label{1}
 \frac{d}{d t} \left(  \frac{1}{2} \left\| W_{\epsilon,\eta}(t) \right\|_{L^2_x}^2\right) 
 = \Braket{ W_{\epsilon,\eta}(t)| \partial_t W_{\epsilon,\eta}(t)}_{L^2_x} \\
 = \Braket{ W_{\epsilon,\eta} (t)| \frac{P_K(W_{\epsilon,\eta})(t)-W_{\epsilon,\eta}(t)}{\epsilon}-\sum_{j=1}^n B_j \partial_j W_{\epsilon,\eta} (t)+ \eta \Delta W_{\epsilon,\eta} (t)},
\end{multline}
where we used the fact that $W_{\epsilon,\eta}$ is a solution to the partial differential equation \ref{eq_visco_non_borne}. \com{Since $0\in K$, we have}
\begin{multline}\label{2}
\Braket{W_{\epsilon,\eta}(t) |P_K(W_{\epsilon,\eta})(t)-W_{\epsilon,\eta}(t)}_{L^2_{x}} \\
= \Braket{P_K(W_{\epsilon,\eta})(t) | P_K(W_{\epsilon,\eta})(t)-W_{\epsilon,\eta}(t) } - \left\| P_K(W_{\epsilon,\eta})(t)-W_{\epsilon,\eta}(t) \right\|_{L^2_{x}}^2 \leq 0.
\end{multline}
On the other hand, if $v\in \mathcal{C}^{\infty}_c(\mathbb{R}^n,\mathbb{R}^m)$, an integration by parts shows that
\[
\Braket{v|\eta \Delta v}_{L^2} = -\eta \left\| Dv \right\|_{L^2_x}^2 \le 0,
\]
and
\[
\Braket{ v| \sum_{j=1}^n B_j \partial_j v } = 0,
\]
since the matrices $B_j$ are \com{symmetric and }independent of the space variables. By approximation, these formulas are true for $v \in H^1(\mathbb R^n,\mathbb R^m)$ as well, and in particular, 
\begin{equation}\label{3}
\Braket{ W_{\epsilon,\eta} (t)|\sum_{j=1}^n B_j \partial_j W_{\epsilon,\eta} (t)+ \eta \Delta W_{\epsilon,\eta} (t)} \leq 0.
\end{equation}
Gathering \ref{1}, \ref{2} and \ref{3}, we obtain that
\[
\frac{d}{d t} \left(  \frac{1}{2} \left\| W_{\epsilon,\eta} \right\|_{L^2_x}^2\right) \le 0,
\]
and using Gronwall's Lemma, we derive the first estimate \ref{estim_theo_1}.
\begin{equation}
\label{estim_1}
\sup_{0\le t \le T} \left\| W_{\epsilon,\eta}(t) \right\|_{L^2_x}^2 = \left\| W^0 * \rho_\eta \right\|_{L^2_x}^2 \le \left\| W^0 \right\|_{L^2_x}^2.
\end{equation}

We apply the same \com{argument} to \com{the spatial weak derivates} $\partial_k W_{\epsilon,\eta}$ of $W_{\epsilon,\eta}$. Deriving the partial differential equation \ref{eq_visco_non_borne} in the sense of distribution, we infer that
\[
\partial_t \partial_k W_{\epsilon,\eta} - \eta \Delta \partial_k W_{\epsilon,\eta} + \sum_{j=1}^n B_j \partial_j \partial_k W_{\epsilon,\eta} = \partial_k \frac{P_K(W_{\epsilon,\eta})-W_{\epsilon,\eta}}{\epsilon}.
\]
The previous equality \com{actually holds} in
$L^2(0,T,L^2(\mathbb{R}^n,\mathbb{R}^m))$ thanks to the regularity of $W_{\epsilon,\eta}$, and we can apply Lemma~\ref{lem_der} to obtain that \com{{\^E}for a.e. $t \in [0,T]$,}
\begin{multline*}
\frac{d}{d t} \left(  \frac{1}{2} \left\| \partial_k W_{\epsilon,\eta} (t)\right\|_{L^2_x}^2\right)
= \Braket{ \partial_k W_{\epsilon,\eta}(t), \partial_t \partial_k W_{\epsilon,\eta}(t)}_{L^2_x} \\
= \Braket{ \partial_k W_{\epsilon,\eta}(t) | \partial_k \frac{P_K(W_{\epsilon,\eta})(t)-W_{\epsilon,\eta}(t)}{\epsilon}-\sum_{j=1}^n B_j \partial_j \partial_k W_{\epsilon,\eta}(t) + \eta \Delta \partial_k W_{\epsilon,\eta}(t) }.
\end{multline*}
\com{Arguing as in \ref{3}, we get}
\[
\begin{aligned}
  \frac{d}{d t} \left( \frac{1}{2} \left\| \partial_k
      W_{\epsilon,\eta} (t)\right\|_{L^2_x}^2\right) &\le \Braket{
    \partial_k W_{\epsilon,\eta} (t)| \partial_k
    \frac{P_K(W_{\epsilon,\eta}(t))-W_{\epsilon,\eta}(t)}{\epsilon}}\\
  &\le \frac{M}{\epsilon} \left\| \partial_k W_{\epsilon,\eta}
    (t)\right\|_{L^2_x}^2,
\end{aligned}
\]
\com{where we used Lemma \ref{sob}.} Using again Gronwall's Lemma, it yields 
\begin{eqnarray}
\sup_{0\le t \le T} \left\| \partial_k W_{\epsilon,\eta}(t) \right\|_{L^2_x}^2 &\le& \exp\left( \frac{2TM}{\epsilon} \right) \left\| \partial_k W^0 * \rho_\eta \right\|_{L^2_x}^2 \nonumber\\
&\le& \exp\left( \frac{2TM}{\epsilon} \right) \left\| \partial_k W^0 \right\|_{L^2_x}^2,\label{estim_2}
\end{eqnarray}
\com{which completes the proof of estimate \ref{estim_theo_2}.}

We \com{finally derive the last estimate for } $\partial_{t} W_{\epsilon,\eta}$. Again, we derive the partial differential equation \ref{eq_visco_non_borne}  \com{with respect to $t$ in the distributional sense} to get 
\[
\partial_{tt} W_{\epsilon,\eta} - \eta \Delta \partial_t W_{\epsilon,\eta} + \sum_{j=1}^n B_j \partial_j \partial_t W_{\epsilon,\eta} = \partial_t \frac{P_K(W_{\epsilon,\eta})-W_{\epsilon,\eta}}{\epsilon}
\]
in $L^2(0,T,H^{-1}(\mathbb{R}^n,\mathbb{R}^m))$. \com{Using again} Lemma~\ref{lem_der}, we get that for a.e. $t \in [0,T]$,
\begin{multline*}
\frac{d}{d t} \left(  \frac{1}{2} \left\| \partial_t W_{\epsilon,\eta} (t)\right\|_{L^2_x}^2\right) = \Braket{ \partial_t W_{\epsilon,\eta}(t) | \partial_{tt} W_{\epsilon,\eta}(t)}_{H^1_x,H^{-1}_x}\\
=\Braket{ \partial_t W_{\epsilon,\eta}(t)| \eta \Delta \partial_t W_{\epsilon,\eta}(t) - \sum_{j=1}^n B_j \partial_j \partial_t W_{\epsilon,\eta}(t) + \partial_t \frac{P_K(W_{\epsilon,\eta})(t)-W_{\epsilon,\eta}(t)}{\epsilon} }_{H^1_x,H^{-1}_x}.
\end{multline*}
\com{As before, we infer that
\[
\Braket{ \partial_t W_{\epsilon,\eta}(t)| \sum_{j=1}^n B_j \partial_j \partial_t W_{\epsilon,\eta}(t)}_{L^2_x}=0, \quad
\Braket{ \partial_t W_{\epsilon,\eta}(t)| \eta \Delta \partial_t W_{\epsilon,\eta}(t) }_{H^1_x,H^{-1}_x}\le 0,
\] 
which shows, thanks to Lemma \ref{sob}, that
\begin{multline*}
\frac{d}{d t} \left(  \frac{1}{2} \left\| \partial_t W_{\epsilon,\eta} (t)\right\|_{L^2_x}^2\right)\le \frac{1}{\epsilon} \left\| \partial_t W_{\epsilon,\eta} (t)\right\|_{L^2_x} \left\| \partial_t(P_K(W_{\epsilon,\eta})(t)-W_{\epsilon,\eta}(t)) \right\|_{L^2_x} \\
\le \frac{M}{\epsilon} \left\| \partial_t W_{\epsilon,\eta}(t) \right\|_{L^2_x}^2.
\end{multline*}
At this point, we would like to use  Gronwall's Lemma. To do that, we need to know the value} of $\partial_t W_{\epsilon,\eta}$ at $t=0$. To this aim, let us take a test function $\phi \in \mathcal{C}^\infty_{c}(\mathbb{R}\times \mathbb{R}^n,\mathbb{R}^m)$ with $\phi(T,\cdot) = 0$. On the one hand, \com{according to Fubini's Theorem and Green's formula on $[0,T]$,} {\^E}we have
\[
\int_0^T \int_{\mathbb{R}^n} \Braket{\partial_t W_{\epsilon,\eta} ; \partial_t \phi } = \int_{\mathbb{R}^n} \left[ - \Braket{\partial_t W_{\epsilon,\eta}(0,\cdot);\phi(0,\cdot)} -\int_0^T  \Braket{\partial_{tt} W_{\epsilon,\eta} ; \phi}  \right],
\] 
\com{since $\partial_t W_{\epsilon,\eta} \in H^1(0,T;H^{-1}(\mathbb{R}^n,\mathbb{R}^m))$ and $\phi$ is smooth.} On the other hand, \com{according to equation \ref{eq_visco_non_borne}}
\begin{multline*}
\int_0^T \int_{\mathbb{R}^n} \Braket{\partial_t W_{\epsilon,\eta} ; \partial_t \phi } \\
 =  \int_0^T \int_{\mathbb{R}^n} \Braket{\eta \Delta W_{\epsilon,\eta} - \sum_{j=1}^n B_j \partial_j W_{\epsilon,\eta} + \frac{P_K(W_{\epsilon,\eta})-W_{\epsilon,\eta}}{\epsilon} ; \partial_t \phi } \\
 =  \int_{\mathbb{R}^n} \left\{ - \int_0^T \Braket{\partial_t\left[ \eta\Delta W_{\epsilon,\eta} - \sum_{j=1}^n B_j \partial_j W_{\epsilon,\eta} + \frac{P_K(W_{\epsilon,\eta})-W_{\epsilon,\eta}}{\epsilon} \right] ; \phi}  \right. \\
   - \left. \Braket{\eta\Delta W^0*\rho_\eta - \sum_{j=1}^n B_j \partial_j W^0*\rho_\eta + \frac{P_K(W^0*\rho_\eta)-W^0*\rho_\eta}{\epsilon} ; \phi(0,\cdot)} \right\}.
\end{multline*}

\noindent Since $\phi(0,\cdot)$ is arbitrary, we obtain that 
\[
\partial_t W_{\epsilon,\eta}(0,\cdot) = \eta\Delta W^0*\rho_\eta - \sum_{j=1}^n B_j \partial_j W^0*\rho_\eta + \frac{P_K(W^0*\rho_\eta)-W^0*\rho_\eta}{\epsilon}.
\]
\com{We are now in position to apply Gronwall's Lemma which implies that}
\[
\sup_{0\le t \le T} \left\| \partial_t W_{\epsilon,\eta} (t)\right\|_{L^2_x}^2 \le \exp\left(\frac{2TM}{\epsilon} \right) \left\| \partial_t W_{\epsilon,\eta}(0,\cdot) \right\|_{L^2_x}^2.
\]
\com{Since,
\[
\left\| \Delta W^0*\rho_\eta \right\|_{L^2_x} \le \frac{C}{\eta} \left\| \nabla W^0 \right\|_{L^2_x},
\]
for some constant $C>0$ independent of $\eta$, we deduce that
$$\sup_{0\le t \le T} \left\| \partial_t W_{\epsilon,\eta} (t)\right\|_{L^2_x} \le C_\epsilon \left\| W^0 \right\|_{H^1_x},$$
where $C_\epsilon>0$ is another constant independent of $\eta$, which completes the proof of the last estimate \ref{estim_3}.}
\end{proof}

\section{Approximation of the convex constraints}
We now consider the relaxation problem
\begin{equation}
\label{ApproxSystNonBorn}
\begin{cases}
\displaystyle \partial_t W_{\epsilon} + \sum_{j=1}^n B_j \partial_j W_{\epsilon} = \frac{P_K(W_{\epsilon})-W_{\epsilon}}{\epsilon},& \text{on } ]0,T]\times \mathbb{R}^n, \\
W_{\epsilon}(0,x)=W^0(x),&\text{if } x\in \mathbb{R}^n.
\end{cases}
\end{equation}
Thanks to Theorem \ref{theoWepseta}, we will construct the solution to the previous problem as the limit of the solution of the parabolic problem~\ref{eq_visco_non_borne} when $\eta$ tends to zero.
\begin{theorem}
\com{There exists a unique solution $W_\epsilon \in H^1(]0,T[ \times \mathbb{R}^n,\mathbb{R}^m))$ to~\ref{ApproxSystNonBorn} satisfying, for all $\phi \in \mathcal{C}^\infty_c(\mathbb{R}\times \mathbb{R}^n, \mathbb{R}^m)$,}
\begin{equation}\label{cons}
 \int_0^T \int_{\mathbb{R}^n} \!\Braket{\partial_t W_{\epsilon} + \sum_{j=1}^n B_j \partial_j W_{\epsilon};\phi} d x \,d t 
  = \int_0^T \int_{\mathbb{R}^n} \!\Braket{\frac{P_K(W_{\epsilon})-W_{\epsilon}}{\epsilon};\phi} d x \,d t,
  \end{equation}
and $W_\epsilon(0,\cdot) = W_0$ in $L^2(\mathbb{R}^n,\mathbb{R}^m)$. In addition,
\begin{equation}
\label{estim_Weps_LinfL2}
\sup_{0\le t \le T} \left\| W_{\epsilon}(t) \right\|_{L^2_x}^2 \le \left\| W^0 \right\|_{L^2_x}^2.
\end{equation}
\end{theorem}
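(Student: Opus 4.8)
The plan is to construct $W_\epsilon$ as the limit, as $\eta\to 0$, of the parabolic approximations $W_{\epsilon,\eta}$ provided by Theorem~\ref{theoWepseta} (throughout, $\epsilon>0$ is fixed and, as in that theorem, $W^0\in H^1(\mathbb{R}^n,K)$), and to deduce uniqueness from a Gronwall estimate based on the nonexpansiveness of $P_K$. First I would exploit the $\eta$-uniform bounds \ref{estim_theo_1}, \ref{estim_theo_2} and \ref{estim_3}: they make $(W_{\epsilon,\eta})_{\eta>0}$ bounded, uniformly in $\eta$, in $H^1_{t,x}$ and in $L^\infty(0,T;H^1_x)$, and $(\partial_t W_{\epsilon,\eta})_{\eta>0}$ bounded in $L^\infty(0,T;L^2_x)$. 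By weak/weak-$*$ compactness there are a sequence $\eta_k\to 0$ and a limit $W_\epsilon\in H^1_{t,x}$ with $W_{\epsilon,\eta_k}\rightharpoonup W_\epsilon$ weakly in $H^1_{t,x}$, weakly-$*$ in $L^\infty(0,T;H^1_x)$, and $\partial_t W_{\epsilon,\eta_k}$ converging weakly-$*$ to $\partial_t W_\epsilon$ in $L^\infty(0,T;L^2_x)$. To be able to pass to the limit in the projection term I would upgrade this to strong local convergence: for each ball $B_R\subset\mathbb{R}^n$ the sequence is bounded in $H^1(]0,T[\times B_R)$, which embeds compactly into $L^2(]0,T[\times B_R)$, so after a diagonal extraction $W_{\epsilon,\eta_k}\to W_\epsilon$ strongly in $L^2_{\mathrm{loc}}(]0,T[\times\mathbb{R}^n)$; since $P_K$ is $1$-Lipschitz, $P_K(W_{\epsilon,\eta_k})\to P_K(W_\epsilon)$ in $L^2_{\mathrm{loc}}(]0,T[\times\mathbb{R}^n)$ as well.

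Next I would pass to the limit in the weak formulation of \ref{eq_visco_non_borne}, which by Theorem~\ref{theoWepseta} holds in $L^2_{t,x}$. Testing it against $\phi\in\mathcal{C}^\infty_c(\mathbb{R}\times\mathbb{R}^n,\mathbb{R}^m)$ and transferring the Laplacian onto $\phi$, the viscous contribution becomes $\eta_k\int_0^T\!\int_{\mathbb{R}^n}\Braket{W_{\epsilon,\eta_k};\Delta\phi}$, which tends to $0$ because $\|W_{\epsilon,\eta_k}\|_{L^2_{t,x}}$ is bounded and $\eta_k\to0$; the two first-order terms converge by weak $L^2_{t,x}$ convergence, and the projection term by the strong $L^2$ convergence on $\mathrm{supp}\,\phi$. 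This gives exactly \ref{cons}. For the initial datum I would use that the trace at $t=0$ is a bounded, hence weakly continuous, operator $H^1_{t,x}\to L^2_x$, so that $W_{\epsilon,\eta_k}(0,\cdot)\rightharpoonup W_\epsilon(0,\cdot)$ in $L^2_x$; since $W_{\epsilon,\eta_k}(0,\cdot)=W^0*\rho_{\eta_k}\to W^0$ strongly in $L^2_x$, it follows that $W_\epsilon(0,\cdot)=W^0$. Finally, \ref{estim_Weps_LinfL2} is obtained by passing to the weak-$*$ limit in \ref{estim_theo_1} via lower semicontinuity of the $L^\infty(0,T;L^2_x)$-norm, noting that $W_\epsilon\in H^1_{t,x}\hookrightarrow\mathcal{C}([0,T];L^2_x)$, so that its essential supremum is an honest supremum.

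For uniqueness I would take two solutions $W_\epsilon^1,W_\epsilon^2\in H^1_{t,x}$ with the same initial datum, set $Z=W_\epsilon^1-W_\epsilon^2$, and observe that $Z\in L^2(0,T;H^1_x)$, $\partial_t Z\in L^2(0,T;L^2_x)$, $Z(0,\cdot)=0$, and $\partial_t Z+\sum_{j=1}^n B_j\partial_j Z=\frac{1}{\epsilon}\big(P_K(W_\epsilon^1)-P_K(W_\epsilon^2)-Z\big)$ in $L^2_{t,x}$. Applying Lemma~\ref{lem_der} to $Z$, using $\Braket{Z(t)|\sum_{j=1}^n B_j\partial_j Z(t)}_{L^2_x}=0$ (as for \ref{3}) together with the bound $\Braket{Z(t)|P_K(W_\epsilon^1)(t)-P_K(W_\epsilon^2)(t)}_{L^2_x}\le\|Z(t)\|_{L^2_x}\,\|P_K(W_\epsilon^1)(t)-P_K(W_\epsilon^2)(t)\|_{L^2_x}\le\|Z(t)\|_{L^2_x}^2$ coming from the nonexpansiveness of $P_K$, one obtains $\frac{d}{dt}\big(\frac{1}{2}\|Z(t)\|_{L^2_x}^2\big)\le0$, whence $Z\equiv0$ by Gronwall's Lemma; uniqueness then also implies that the whole family $(W_{\epsilon,\eta})_{\eta>0}$, not merely a subsequence, converges as $\eta\to0$. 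I expect the main obstacle to be this linearisation of the projection: the only $\eta$-uniform control is an $H^1$ bound over the whole of $\mathbb{R}^{n+1}$, which is not compactly embedded in $L^2(\mathbb{R}^{n+1})$, so strong convergence is merely local, and one has to verify that this is enough — which it is, because the test functions have compact support and because the $L^2_{t,x}$ bound is precisely what makes $\eta_k\Delta W_{\epsilon,\eta_k}$ disappear in the limit.
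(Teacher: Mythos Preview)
Your proposal is correct and follows essentially the same route as the paper: extract a weak $H^1_{t,x}$ limit from the $\eta$-uniform bounds of Theorem~\ref{theoWepseta}, upgrade to local strong $L^2$ convergence by Rellich to pass to the limit in the projection term, kill the viscous term via the $L^2$ bound, and prove uniqueness by a Gronwall argument based on the $1$-Lipschitz property of $P_K$. The only minor differences are that you recover the initial condition via weak continuity of the trace $H^1_{t,x}\to L^2_x$ (the paper does it by comparing two integrations by parts), and that your uniqueness estimate $\frac{d}{dt}\big(\tfrac12\|Z(t)\|_{L^2_x}^2\big)\le 0$ is slightly sharper than the paper's bound $\le \tfrac{2}{\epsilon}\|Z(t)\|_{L^2_x}^2$; both variants are correct.
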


\begin{proof}
Thanks to Theorem~\ref{theoWepseta}, the sequence $(W_{\epsilon,\eta})_{\eta>0}$ is bounded in the space $H^1(]0,T[\times \mathbb{R}^n,\mathbb{R}^m)$. We can thus extract a subsequence \com{(not relabeled) such that}
$$W_{\epsilon,\eta} \rightharpoonup  W_\epsilon  \text{ weakly in } H^1(]0,T[\times \mathbb{R}^n,\mathbb{R}^m). $$
\com{In particular, \ref{estim_Weps_LinfL2} is a consequence of \ref{estim_theo_1} by the lower semicontinuity of the norm with respect to weak convergence. Since the embedding of $H^1(]0,T[\times \mathbb{R}^n,\mathbb{R}^m)$ into $L^2_{\rm loc}([0,T]\times \mathbb{R}^n,\mathbb{R}^m) $ is compact (cf~\cite{Adams}), we deduce that
$$W_{\epsilon,\eta} \to  W_\epsilon  \text{ strongly in } L^2(]0,T[\times ]-R,R[^n,\mathbb{R}^m),$$
for each $R>0$. } Let $\phi \in \mathcal{C}^\infty_c (\mathbb{R}\times \mathbb{R}^n,\mathbb{R}^m)$ and $R$ such that the support of $\phi$ is \com{contained} in $[-R,R]^{n+1}$. Since $W_{\epsilon,\eta}$ is a weak solution of \ref{Visco_pt_fixe}, we have
\begin{multline*}
\int_0^T \int_{\mathbb{R}^n} \Big(\Braket{ \partial_t W_{\epsilon,\eta}; \phi} + \eta \sum_{j=1}^n\Braket{ \partial_jW_{\epsilon,\eta} ;\partial_j \phi} +\sum_{j=1}^n \Braket{B_j \partial_j W_{\epsilon,\eta} ; \phi}\Big)\, d x \,d t \\
= \int_0^T \int_{\mathbb{R}^n} \Braket{\frac{P_K(W_{\epsilon,\eta})-W_{\epsilon,\eta}}{\epsilon};\phi} \, d x\, d t.
\end{multline*}
Using the weak convergence, we infer that
\begin{multline*}
\int_0^T \int_{\mathbb{R}^n} \Big( \Braket{ \partial_t W_{\epsilon,\eta}; \phi} +  \eta \sum_{j=1}^n\Braket{ \partial_jW_{\epsilon,\eta} ;\partial_j \phi} +\sum_{j=1}^n \Braket{B_j \partial_j W_{\epsilon,\eta} ; \phi} \Big)\, d x \,d t \\
\to \int_0^T \int_{\mathbb{R}^n} \Big(\Braket{ \partial_t W_{\epsilon}; \phi} + \sum_{j=1}^n \Braket{B_j \partial_j W_{\epsilon} ; \phi} \Big)\, d x\, d t.
\end{multline*}
On the other hand, the strong convergence yields
\[
\Braket{\frac{P_K(W_{\epsilon,\eta})-W_{\epsilon,\eta}}{\epsilon};\phi}
\to  \Braket{\frac{P_K(W_{\epsilon})-W_{\epsilon}}{\epsilon};\phi}\text{ strongly in } L^1 (]0,T[ \times \mathbb R^n), 
\]
and consequently, we obtain that
\begin{equation}
\label{equa_sans_visc}
\int_0^T \int_{\mathbb{R}^n}\!\Braket{\partial_t W_{\epsilon} + \sum_{j=1}^n B_j \partial_j W_{\epsilon};\phi} d x\, d t 
  = \int_0^T \int_{\mathbb{R}^n} \!\Braket{\frac{P_K(W_{\epsilon})-W_{\epsilon}}{\epsilon};\phi} d x \,d t.
\end{equation} 
We next focus on the initial condition. We take $\phi \in \mathcal{C}^\infty_c (]-\infty,T[\times \mathbb{R}^n,\mathbb{R}^m)$ (in particular $\phi(T)=0$). An integration by parts shows that
\begin{multline*}
-\int_0^T \int_{\mathbb{R}^n}  \Braket{ W_{\epsilon,\eta}; \partial_t \phi}\, d x \, d t + \int_{\mathbb{R}^n} \Braket{ W^0*\rho_{\eta}(x); \phi(0,x)} d x \\
= \int_0^T \int_{\mathbb{R}^n} \Braket{ \partial_t W_{\epsilon,\eta}; \phi}\, d x \, d t.
\end{multline*}
Letting $\eta$ tend to zero, and using \ref{equa_sans_visc} leads to
\begin{multline}
\label{green_sur_visc}
\int_0^T \int_{\mathbb{R}^n} \Big( - \Braket{ W_{\epsilon}; \partial_t \phi} +\sum_{j=1}^n \Braket{B_j \partial_j W_{\epsilon} ; \phi}\Big) \,d x  \, d t \\
 + \int_{\mathbb{R}^n} \Braket{ W^0(x); \phi(0,x)} d x  = \int_0^T \int_{\mathbb{R}^n} \Braket{\frac{P_K(W_{\epsilon})-W_{\epsilon}}{\epsilon};\phi}d x \, d t,
\end{multline}
since $W^0*\rho_{\eta} \to W^0$ strongly in $L^2_{\text{loc}}(\mathbb R^n,\mathbb R^m)$. We now integrate by parts in~\ref{equa_sans_visc}, using the fact that $W_\epsilon \in H^1(0,T,L^2(\mathbb{R}^n,\mathbb{R}^m))$,
\begin{multline}
\label{green_sur_final}
\int_0^T \int_{\mathbb{R}^n} \Big( \Braket{ W_{\epsilon}; \partial_t \phi} +\sum_{j=1}^n \Braket{B_j \partial_j W_{\epsilon} ; \phi}\Big)\,d x \, d t \\ 
+ \int_{\mathbb{R}^n} \Braket{ W_\epsilon(0,x); \phi(0,x)} d x  = \int_0^T \int_{\mathbb{R}^n} \Braket{\frac{P_K(W_{\epsilon})-W_{\epsilon}}{\epsilon};\phi} d x\, d t.
\end{multline}
Using the equations~\ref{green_sur_visc} and~\ref{green_sur_final}, it gives that
\[
\int_{\mathbb{R}^n} \Braket{ W_\epsilon(0,x); \phi(0,x)} d x = \int_{\mathbb{R}^n} \Braket{ W^0(x); \phi(0,x)} d x,
\]
and consequently the initial condition is satisfied in $L^2(\mathbb{R}^n,\mathbb{R}^m)$. 

\com{It remains to show the uniqueness. Let us consider two solutions} $W_\epsilon$ and $\tilde{W_{\epsilon}}$ associated with the same initial condition $W^0$. Using the partial differential equation \ref{ApproxSystNonBorn}, we obtain that
\[
\partial_t \left( W_\epsilon-\tilde{W_{\epsilon}} \right)+ \sum_{j=1}^n B_j \partial_j \left(W_\epsilon-\tilde{W_{\epsilon}} \right) = \frac{P_K(W_{\epsilon})-W_\epsilon-P_K(\tilde{W_{\epsilon}})+\tilde W^\epsilon}{\epsilon}.
\]
As already observed, we know that for a.e. $t \in [0,T]$,
\[
\int_{\mathbb{R}^n} \sum_{j=1}^n \Braket{B_j \partial_j W_\epsilon(t)-\tilde{W_{\epsilon}}(t) ; W_\epsilon(t)-\tilde{W_{\epsilon}}(t)} d x = 0,
\]
and also
\[
\frac{1}{2}\frac{d }{d  t} \left\| W_\epsilon(t)-\tilde{W_{\epsilon}}(t) \right\|_{L^2_x}^2 =  \Braket{ \partial_t W_\epsilon(t)-\partial_t \tilde{W_{\epsilon}}(t)| W_\epsilon(t)-\tilde{W_{\epsilon}}(t)}. 
\]
Consequently, we have that
\begin{multline*}
\frac{1}{2} \frac{d }{d  t} \left\| W_\epsilon(t)-\tilde{W_{\epsilon}} (t)\right\|_{L^2_x}^2 \\
=\Braket{\frac{P_K(W_{\epsilon})(t)-W_\epsilon(t)-P_K(\tilde{W_{\epsilon}})(t)+\tilde W_\epsilon(t)}{\epsilon}|W_\epsilon(t)-\tilde{W_{\epsilon}}(t)}\\
 \le \frac{2}{\epsilon} \left\| W_\epsilon(t) - \tilde{W_{\epsilon}}(t) \right\|_{L^2_x}^2,
\end{multline*}
since the projection is $1$-Lipschitz. Gronwall's Lemma thus implies that $W_\epsilon = \tilde{W_\epsilon}$ since they satisfy the same initial condition. \com{As a consequence of the uniqueness, we deduce that there is no need to extract a subsequence from $(W_{\epsilon,\eta})_{\eta>0}$ to get the convergences as $\eta \to 0$.}
\end{proof}

\section{Convergence of the \com{relaxed formulation}}
In this section, we first show that the solution $W_\epsilon$ to the relaxation problem~\ref{ApproxSystNonBorn} \com{satisfies} the inequality of Definition~\ref{defsolsystnonborn}, and then we prove that we can pass to the limit in this inequality to get a solution to the \com{initial} problem~\ref{SystNonBorn}.
\begin{lemma}
\label{lemma_Weps_ineq}
Let $W_\epsilon$ be the unique solution to~\ref{ApproxSystNonBorn}. For all $\kappa \in K$ and for all $\phi \in W^{1,\infty}((-\infty,T)\times \mathbb{R}^n,\mathbb{R}^+)$ with compact support in $]-\infty,T[\times \mathbb{R}^n$, one has
\begin{multline}
\label{inequa_Weps}
\int_0^T\int_{\mathbb{R}^n} \Big( |W_\epsilon-\kappa|^2\partial_t \phi + \sum_{j=1}^n \Braket{W_\epsilon-\kappa; B_j(W_\epsilon-\kappa)}\partial_j \phi \Big)  d t \, d x \\
+ \int_{\mathbb{R}^n} |W^0(x)-\kappa|^2\phi(0,x)\, d x \ge 0.
\end{multline}
\end{lemma}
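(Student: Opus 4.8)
The plan is to start from the fact, established in the previous section, that $W_\epsilon \in H^1(]0,T[\times\mathbb R^n,\mathbb R^m)$ is a genuine (strong) solution of the relaxed PDE~\ref{ApproxSystNonBorn}, so that all manipulations below are justified by the Sobolev regularity rather than being purely formal. The key idea is to take the scalar product of the equation with $W_\epsilon-\kappa$ for a fixed $\kappa\in K$ and multiply by the nonnegative test function $\phi$, then integrate over $]0,T[\times\mathbb R^n$. First I would record the pointwise (a.e. in $(t,x)$) identity
\[
\Braket{\partial_t W_\epsilon;W_\epsilon-\kappa}=\tfrac12\,\partial_t|W_\epsilon-\kappa|^2,
\qquad
\Braket{B_j\partial_j W_\epsilon;W_\epsilon-\kappa}=\tfrac12\,\partial_j\Braket{W_\epsilon-\kappa;B_j(W_\epsilon-\kappa)},
\]
the second one using that $B_j$ is symmetric and constant. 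These are valid in, say, $L^1_{\rm loc}$ because $W_\epsilon\in H^1$ and $W_\epsilon-\kappa$ is bounded on compact sets, via the chain rule for Sobolev functions (one can also approximate $W_\epsilon$ by smooth functions in $H^1_{t,x}$ to be safe).

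Next I would integrate the resulting identity against $\phi\ge 0$ and integrate by parts in both $t$ and $x$. Since $\phi$ has compact support in $]-\infty,T[\times\mathbb R^n$, the only boundary term is at $t=0$, producing $-\tfrac12\int_{\mathbb R^n}|W_\epsilon(0,x)-\kappa|^2\phi(0,x)\,dx$; using that the initial condition $W_\epsilon(0,\cdot)=W^0$ holds in $L^2$ (from the previous theorem), this becomes the $\int_{\mathbb R^n}|W^0-\kappa|^2\phi(0,x)\,dx$ term in~\ref{inequa_Weps}. The spatial integrations by parts produce no boundary contributions. The upshot is an exact identity:
\[
\int_0^T\!\!\int_{\mathbb R^n}\!\Big(|W_\epsilon-\kappa|^2\partial_t\phi+\sum_j\Braket{W_\epsilon-\kappa;B_j(W_\epsilon-\kappa)}\partial_j\phi\Big)dt\,dx+\int_{\mathbb R^n}\!|W^0-\kappa|^2\phi(0,x)\,dx
=-\frac{2}{\epsilon}\int_0^T\!\!\int_{\mathbb R^n}\!\Braket{P_K(W_\epsilon)-W_\epsilon;W_\epsilon-\kappa}\phi\,dt\,dx.
\]

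The final step is to show the right-hand side is nonnegative, equivalently that $\Braket{P_K(W_\epsilon)-W_\epsilon;W_\epsilon-\kappa}\le 0$ a.e. Writing $w=W_\epsilon(t,x)$ and $w^*=P_K(w)$, the first-order characterization of the projection onto the closed convex set $K$ gives $\Braket{w-w^*;\kappa-w^*}\le 0$ for every $\kappa\in K$; hence
\[
\Braket{w^*-w;w-\kappa}=\Braket{w^*-w;w-w^*}+\Braket{w^*-w;w^*-\kappa}
=-|w-w^*|^2+\Braket{w-w^*;\kappa-w^*}\le 0.
\]
Combined with $\phi\ge0$, this makes the right-hand side above $\ge 0$, which yields exactly inequality~\ref{inequa_Weps}. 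For the stated class of test functions ($\phi\in W^{1,\infty}$ with compact support rather than $\mathcal C^\infty_c$) I would close by a routine density/mollification argument: approximate $\phi$ by nonnegative $\mathcal C^\infty_c$ functions converging to it weakly-$*$ in $W^{1,\infty}$ with supports in a fixed compact set, and pass to the limit using $W_\epsilon\in H^1_{t,x}$ and $W^0\in L^2$. I expect the only mild technical point to be justifying the chain-rule identities and the integration by parts at the level of $H^1$ regularity (and handling the weaker test-function class); the convex-analysis inequality is the conceptual heart but is entirely standard.
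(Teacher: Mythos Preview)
Your proposal is correct and follows essentially the same approach as the paper: multiply the relaxed equation by $W_\epsilon-\kappa$ (and by $\phi$), use the first-order characterization of the projection to show the source term has the right sign, and integrate by parts exploiting $W_\epsilon\in H^1_{t,x}$ and the initial condition. The paper's proof is considerably terser, omitting the explicit chain-rule identities and the density argument for $W^{1,\infty}$ test functions that you spell out, but the substance is identical.
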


\begin{proof}
Since $W_\epsilon$ is a solution to~\ref{ApproxSystNonBorn}, we know that
\[
\int_0^T \int_{\mathbb{R}^n}\Braket{\partial_t W_{\epsilon} + \sum_{j=1}^n B_j \partial_j W_{\epsilon} - \frac{P_K(W_{\epsilon})-W_{\epsilon}}{\epsilon};W_\epsilon - \kappa}d x \,d t = 0.
\]
\com{By the first order characterization of the projection}, one has $$\Braket{ \frac{P_K(W_{\epsilon})-W_{\epsilon}}{\epsilon};W_\epsilon - \kappa}(t,x)\le 0$$ \com{for a.e. $(t,x) \in [0,T] \times \mathbb R^n$. On the other hand, since $W_\epsilon \in H^1(0,T;L^2(\mathbb{R}^n,\mathbb{R}^m))$ and $W_\epsilon \in L^2(0,T;H^1(\mathbb{R}^n,\mathbb{R}^m))$) we can integrate by parts to obtain the desired result.}
\end{proof}

\begin{remark}
\com{Let us stress that, although the function $W_\epsilon$ satisfies the same inequality than the weak constrained solution, it is not a weak constrained solution in the sense of Definition~\ref{defsolsystnonborn} because it does not  \textit{a priori} belong to $K$.}
\end{remark}

To get a weak constrained solution from the sequence of solutions $(W_\epsilon)_{\epsilon>0}$ to the relaxation problem~\ref{ApproxSystNonBorn}, we need to pass to the limit as $\epsilon\to 0$ in the previous inequality. This is the purpose of the following result.

\begin{theorem}
\com{For every  bounded open set $\omega \subset \mathbb{R}^n$, the sequence $(W_\epsilon)_{\epsilon>0}$  converges strongly in $L^2((0,T)\times \omega,\mathbb{R}^m)$ to some  function $W$ which is a weak constrained solution to problem~\ref{SystNonBorn}.}
\end{theorem}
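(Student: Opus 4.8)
The plan is to obtain strong $L^2_{\mathrm{loc}}$ compactness for the family $(W_\epsilon)_{\epsilon>0}$ and then pass to the limit in the inequality of Lemma~\ref{lemma_Weps_ineq}. The uniform bound \ref{estim_Weps_LinfL2} already gives that $(W_\epsilon)$ is bounded in $L^\infty(0,T;L^2_x)$, hence weakly-$*$ convergent (up to subsequence) to some $W$. The key missing ingredient is equicontinuity in time, and this is where the relaxation structure helps: testing \ref{ApproxSystNonBorn} against $W_\epsilon-P_K(W_\epsilon)$ and using the first order characterization of the projection shows that $\tfrac1\epsilon(P_K(W_\epsilon)-W_\epsilon)$ is bounded in $L^2_{t,x}$ \emph{uniformly in $\epsilon$}; indeed, integrating
$$
\Braket{\partial_t W_\epsilon+\textstyle\sum_j B_j\partial_j W_\epsilon - \tfrac{P_K(W_\epsilon)-W_\epsilon}{\epsilon}\,;\,W_\epsilon}=0
$$
in space-time, the symmetric transport term drops, the energy term is handled by \ref{estim_Weps_LinfL2}, and one is left with
$$
\int_0^T\!\!\int_{\mathbb{R}^n}\frac{\Braket{W_\epsilon-P_K(W_\epsilon);W_\epsilon}}{\epsilon}\,dx\,dt \le \frac12\|W^0\|_{L^2_x}^2 ;
$$
since $\Braket{W_\epsilon-P_K(W_\epsilon);W_\epsilon}\ge |W_\epsilon-P_K(W_\epsilon)|^2$ (using $0\in K$), this yields $\|P_K(W_\epsilon)-W_\epsilon\|_{L^2_{t,x}}^2\le \tfrac{\epsilon}{2}\|W^0\|_{L^2_x}^2 \to 0$, and in particular $\tfrac1\epsilon(P_K(W_\epsilon)-W_\epsilon)$ need not be bounded, but the slightly weaker statement $\|\tfrac1{\sqrt\epsilon}(P_K(W_\epsilon)-W_\epsilon)\|_{L^2_{t,x}}\le C$ holds. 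From the equation, $\partial_t W_\epsilon = -\sum_j B_j\partial_j W_\epsilon + \tfrac1\epsilon(P_K(W_\epsilon)-W_\epsilon)$ is then bounded in $H^{-1}_x$ only if we also control the right-hand side in a negative norm; the point is that $\tfrac1\epsilon(P_K(W_\epsilon)-W_\epsilon)$, though not bounded in $L^2$, \emph{is} bounded in $H^{-1}_{t,x}$ after multiplication against test functions, because $P_K(W_\epsilon)-W_\epsilon\to 0$ strongly in $L^2$ and $\partial_t W_\epsilon+\sum_j B_j\partial_j W_\epsilon$ is bounded in $H^{-1}_{t,x}$ by duality from the $L^\infty L^2$ bound.

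With $(W_\epsilon)$ bounded in $L^\infty(0,T;L^2_x)$ and $(\partial_t W_\epsilon)$ bounded in $L^2(0,T;H^{-1}_{\mathrm{loc}})$ (writing $\partial_t W_\epsilon = -\sum_j B_j\partial_j W_\epsilon$ plus a term going to $0$ in $L^2$), the Aubin--Lions--Simon lemma gives, for each bounded open $\omega\subset\mathbb{R}^n$, a subsequence with $W_\epsilon\to W$ strongly in $L^2((0,T)\times\omega,\mathbb{R}^m)$; by a diagonal argument over an exhaustion of $\mathbb{R}^n$ one gets strong convergence in $L^2_{\mathrm{loc}}$, and since $\mathrm{dist}(W_\epsilon,K)=|W_\epsilon-P_K(W_\epsilon)|\to 0$ in $L^2_{t,x}$, the limit satisfies $W(t,x)\in K$ a.e., so $W\in L^2([0,T]\times\mathbb{R}^n,K)$. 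It remains to pass to the limit in \ref{inequa_Weps}: the term $\int |W^0-\kappa|^2\phi(0,\cdot)\,dx$ is fixed, and on $\mathrm{supp}\,\phi$ (which is compact, hence contained in $[0,T]\times\omega$ for some bounded $\omega$) the strong $L^2$ convergence lets one pass to the limit in $\int |W_\epsilon-\kappa|^2\partial_t\phi$ and in $\int\Braket{W_\epsilon-\kappa;B_j(W_\epsilon-\kappa)}\partial_j\phi$, since these are continuous quadratic expressions in $W_\epsilon$ integrated against bounded functions over a set where $L^2$-strong convergence holds. This yields \ref{inequadefsolsystnonborn} for all $\kappa\in K$ and all admissible $\phi$, i.e. $W$ is a weak constrained solution. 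By the uniqueness part of the first theorem of \cite{DLS}, the limit $W$ is independent of the extracted subsequence, so the whole family $(W_\epsilon)_{\epsilon>0}$ converges.

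The main obstacle is the compactness step: the source term $\tfrac1\epsilon(P_K(W_\epsilon)-W_\epsilon)$ is not bounded in $L^2$, so one cannot naively bound $\partial_t W_\epsilon$ in $L^2(H^{-1})$. The resolution is to observe that, by the energy identity above, the combination $\partial_t W_\epsilon+\sum_j B_j\partial_j W_\epsilon$ equals a quantity that tends to $0$ in $L^2_{t,x}$, hence $\partial_t W_\epsilon$ is, modulo a strongly convergent remainder, a pure first-order spatial operator applied to an $L^\infty(L^2)$-bounded family — and that is exactly the hypothesis needed for the velocity-averaging-free form of Aubin--Lions (compactness of $W_\epsilon$ in $C([0,T];H^{-s}_{\mathrm{loc}})$ for $s>0$, combined with the $L^2_{\mathrm{loc}}$ bound, interpolates to $L^2_{\mathrm{loc}}$-strong convergence). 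One must be a little careful that the test functions $\phi$ in \ref{inequa_Weps} are only $W^{1,\infty}$ with compact support, not smooth, but this is harmless: the convergences used are all in $L^2$ or $L^1$ against $L^\infty$ data, so no smoothness of $\phi$ beyond what is stated is required, and the boundary term at $t=0$ is unaffected by the limit since $W^0$ does not depend on $\epsilon$.
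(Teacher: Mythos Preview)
Your compactness step has a genuine gap. You correctly derive that $\|P_K(W_\epsilon)-W_\epsilon\|_{L^2_{t,x}}^2\le \tfrac{\epsilon}{2}\|W^0\|_{L^2_x}^2$, and you also correctly note that this does \emph{not} give a uniform $L^2$ bound on $\tfrac{1}{\epsilon}(P_K(W_\epsilon)-W_\epsilon)$, only on $\tfrac{1}{\sqrt\epsilon}(P_K(W_\epsilon)-W_\epsilon)$. But then you twice slip into claiming the opposite: you write that $\partial_t W_\epsilon$ equals $-\sum_j B_j\partial_j W_\epsilon$ plus ``a term going to $0$ in $L^2$'', and later that the remainder is ``strongly convergent''. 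It is not: $\|\tfrac{1}{\epsilon}(P_K(W_\epsilon)-W_\epsilon)\|_{L^2_{t,x}}$ is only $O(1/\sqrt\epsilon)$, and there is no additional structure (oscillation, gradient form, etc.) that would improve this in any negative Sobolev norm. The observation that $\tfrac{1}{\epsilon}(P_K(W_\epsilon)-W_\epsilon)=\partial_t W_\epsilon+\sum_j B_j\partial_j W_\epsilon$ is bounded in $H^{-1}_{t,x}$ is true but circular for your purpose: it uses a time derivative of $W_\epsilon$, so it cannot be fed into Aubin--Lions--Simon, which requires a bound on $\partial_t W_\epsilon$ in $L^p(0,T;X)$ for some Banach space $X$. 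As it stands, you do not have uniform equicontinuity in time, and the Aubin--Lions step does not go through.

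The paper avoids this obstruction entirely by never trying to bound $\partial_t W_\epsilon$. Instead it proves the two Riesz--Fr\'echet--Kolmogorov / Simon conditions directly on translations. For a spatial shift $h$, the function $W_{\epsilon,h}:=W_\epsilon(\cdot,\cdot+h)$ solves the same relaxed problem with initial datum $W^0(\cdot+h)$; testing the equation for the difference against $(W_{\epsilon,h}-W_\epsilon)\varphi$, the stiff term contributes
\[
\frac{1}{\epsilon}\int\!\!\int \Braket{P_K(W_{\epsilon,h})-P_K(W_\epsilon)-(W_{\epsilon,h}-W_\epsilon)\,;\,W_{\epsilon,h}-W_\epsilon}\varphi \le 0
\]
because $P_K$ is $1$-Lipschitz (equivalently, $I-P_K$ is monotone). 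So the $\tfrac{1}{\epsilon}$ disappears with a favorable sign, and choosing $\varphi$ to be a cone adapted to the finite propagation speed $L=\max_j\rho(B_j)$ yields
\[
\int_0^T\!\!\int_{B(0,r)}|W_{\epsilon,h}-W_\epsilon|^2\,dx\,dt \le T\int_{B(0,r+nLT)}|W^0(\cdot+h)-W^0|^2\,dx \le T|h|^2\|\nabla W^0\|_{L^2}^2,
\]
uniformly in $\epsilon$. The time-translation estimate is obtained the same way, with an additional lemma controlling $\|W_\epsilon(h,\cdot)-W^0\|_{L^2_{\mathrm{loc}}}$ uniformly as $h\to 0$. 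The moral is that the monotone structure of the relaxation kills the stiff term at the level of \emph{differences of solutions}, which is exactly what the translation estimates need; your approach tries to control the stiff term itself, and that is genuinely $O(1/\sqrt\epsilon)$. Once compactness is in hand, your passage to the limit (the constraint $W\in K$ from $\mathrm{dist}(W_\epsilon,K)\to 0$, and the inequality~\ref{inequa_Weps} by strong $L^2_{\mathrm{loc}}$ convergence on $\mathrm{supp}\,\phi$) is fine and matches the paper.
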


\begin{proof}
Let $\omega$ be an open bounded subset of $\mathbb{R}^n$. We are going to prove \com{the existence of} a subsequence of $(W_\epsilon)_{\epsilon>0}$ (associated with the same initial data $W^0$) which converges in $L^2(0,T,L^2(\omega,\mathbb{R}^m))$. \com{We will use the following compactness criterion (see ~\cite{Simon}).}
\begin{theorem}
\label{theo_simons}
Let $B$ be a Banach space. A subset $F$ of $L^2(0,T;B)$ is relatively compact if and only if both conditions are fulfilled:
\begin{itemize}
\item the set $\left\{ \int_{t_1}^{t_2} f(t)\, d  t :\,  f\in F \right\}$ is relatively compact in $B$ for all $0<t_1<t_2<T$;
\item \com{we have
\[
\sup_{f \in F} \left\| \tau_h f - f\right\|_{L^2(0,T-h;B)} \underset{h\to 0}{\rightarrow} 0,
\]
where $\tau_h f:(t,x)\mapsto \tau_h f(t,x):=f(t+h,x)$.}
\end{itemize}
\end{theorem}

We are going to apply this result to
$F=({W_\epsilon}_{|\omega})_\epsilon$ where ${W_\epsilon}_{|\omega}$
\com{is the restriction to $[0,T] \times \omega$ of $W_\epsilon$}. We
first show that the set $\mathcal{F} = \left\{ \int_{t_1}^{t_2}
  {W_\epsilon}_{|\omega}(t,\cdot)\, d t : \, \epsilon>0 \right\}$ is
relatively compact in $L^2(\omega,\mathbb{R}^m)$ for all
$0<t_1<t_2<T$.  \com{We first observe that} $\mathcal{F}$ is bounded
in $L^2(\omega,\mathbb{R}^m)$ by $(t_2-t_1)\left\|
  W^0\right\|_{L^2_{x}}$ thanks to estimate~\ref{estim_Weps_LinfL2}.
To show that $\mathcal{F}$ is relatively compact in
$L^2(\omega,\mathbb{R}^m)$, \com{it is enough to check the validity
  the Riesz-Fr{\'e}chet-Kolmogorov compactness criterion
  (see~\cite{Brezis} remark 13 page 74), {\it i.e.},}
\begin{equation}
\label{first_limit_theo_simon}
\lim_{h\to 0} \sup_{\epsilon>0}\left\| \int_{t_1}^{t_2} (W_\epsilon(t,x+h)-W_\epsilon(t,x))\, d t \right\|_{L^2_x} =0.
\end{equation}
Note that $W_{\epsilon,h}:=W_\epsilon(\cdot,\cdot+h)$ is a solution to the problem~\ref{ApproxSystNonBorn} associated with the initial condition $W^0(\cdot+h)$. \com{Consequently, since the projection is $1$-Lipschitz, we have for all $\phi \in W^{1,\infty}((-\infty,T)\times \mathbb{R}^n,\mathbb{R}^+)$ with compact support in $]-\infty,T[\times \mathbb{R}^n$, }
\begin{multline*}
\int_0^T \int_{\mathbb{R}^n} \Braket{\partial_t (W_{\epsilon,h}-W_\epsilon) + \sum_{j=1}^n B_j \partial_j (W_{\epsilon,h}-W_\epsilon),W_{\epsilon,h}-W_\epsilon}\varphi \,d t \,d x \\
=\frac{1}{\epsilon}\int_0^T \int_{\mathbb{R}^n} \Braket{P_K(W_{\epsilon,h})-P_K(W_\epsilon)-(W_{\epsilon,h}-W_\epsilon),W_{\epsilon,h}-W_\epsilon}\varphi \, d t \, d x \leq 0,
\end{multline*}
which implies that
\begin{multline}
\label{ineq_diff_esp}
 \int_0^T \int_{\mathbb{R}^n} \Big( |W_{\epsilon,h}-W_\epsilon|^2\partial_t\varphi +\sum_{i=1}^n \Braket{W_{\epsilon,h}-W_\epsilon,B_i (W_{\epsilon,h}-W_\epsilon)}\partial_{x_i}\varphi\Big)\,  d t\,  d x \\
  + \int_{\mathbb{R}^n} |W^0-W^0_h|^2(x) \varphi(0,x)\, d x \ge 0, 
\end{multline}
\noindent where $W^0_h=W^0(\cdot+h)$. Let $r>0$, we define the function $\varphi$ as
\[
\varphi(t,x) = \left\{
\begin{array}{ll}
\frac{T-t}{T}+\frac{r-|x|}{nLT}, &\text{if } t\in [0,T]\text{ and } r \le |x| \le r+nL(T-t), \\
\frac{T-t}{T}, &\text{if } t\in [0,T]\text{ and } x\in B(0,r), \\
0, & \text{otherwise},
\end{array}
\right.
\]
where $L$ is the maximum of the spectral \com{radii} of the matrices $B_i$. \com{We claim that for a.e. $(t,x) \in [0,T]\times\mathbb{R}^n$}
\[
\left(|W_{\epsilon,h}-W_\epsilon|^2 \partial_t\varphi +\sum_{i=1}^n \Braket{W_{\epsilon,h}-W_\epsilon,B_i (W_{\epsilon,h}-W_\epsilon)}\partial_{x_i}\varphi \right)(t,x)\le 0.
\]
This inequality is obviously satisfied as soon as $x\in B(0,r)$ and $t\in [0,T]$. In the case where $r \le |x| \le r+nL(T-t)$, we get for all $1\le i\le n$
\[
\Braket{W_{\epsilon,h}-W_\epsilon,B_i (W_{\epsilon,h}-W_\epsilon)}(t,x) \ge -L|W_{\epsilon,h}-W_\epsilon|^2(t,x).
\]
Consequently multiplying by $\partial_{x_i}\varphi$, it yields
$$ \left(\sum_{i=1}^n \Braket{W_{\epsilon,h}-W_\epsilon,B_i (W_{\epsilon,h}-W_\epsilon)}\partial_{x_i}\varphi\right)(t,x) 
\le   -|W_{\epsilon,h}-W_\epsilon|^2(t,x)\partial_t \varphi(t,x),
$$
and the inequality is true also in that case. 
According to \ref{ineq_diff_esp}, we obtain that
\begin{multline*}
 \int_{\mathbb{R}^n} |W^0-W^0_h|^2(x)\varphi(0,x)\,d x \\
 \ge  -\int_0^T \int_{B(0,r)} \Big(|W_{\epsilon,h}-W_\epsilon|^2 \partial_t \varphi
+\sum_{i=1}^n \Braket{W_{\epsilon,h}-W_\epsilon,B_i (W_{\epsilon,h}-W_\epsilon)}\partial_{x_i}\varphi\Big)\,d t \, d x.
\end{multline*}
Thanks to the definition of $\varphi$, we get
\[
\int_0^T \int_{B(0,r)} |W_{\epsilon,h}-W_{\epsilon}|^2\, d x \, d t \le T \int_{B(0,r+nLT)} |W^0-W^0_h|^2\, d x,
\]
and the regularity of $W_0$ together with \cite[Proposition 9.3]{Brezis} yields
\[
\int_0^T \int_{B(0,r)} |W_{\epsilon,h}-W_{\epsilon}|^2\, d x\, d t \le T |h|^2 \int_{\mathbb{R}^n} |\nabla W^0|^2\, d x.
\]
Therefore, \ref{first_limit_theo_simon} holds, and consequently the set $\mathcal{F}$ is relatively compact in $L^2(\omega,\mathbb{R}^m)$ for all $0<t_1<t_2<T$.

\medskip

It remains to show that
\[
\lim_{h \to 0} \sup_{\epsilon>0}\left\| \tau_h W_{\epsilon} - W_\epsilon\right\|_{L^2(0,T-h;L^2(\omega,\mathbb{R}^m))}  0.
\]
For all $\phi \in W^{1,\infty}(]-\infty,T-h[\times \mathbb{R}^n,\mathbb{R}^+)$ with compact support in $]-\infty,T-h[\times \mathbb{R}^n$, one has
\begin{multline*}
\int_0^{T-h} \int_{\mathbb{R}^n} \Braket{\partial_t (\tau_h W_{\epsilon} -W_\epsilon),(\tau_h W_{\epsilon} -W_\epsilon)}\varphi\, d t \,d x \\
+  \int_0^{T-h} \int_{\mathbb{R}^n}\Braket{\sum_{j=1}^n B_j \partial_j (\tau_h W_{\epsilon} -W_\epsilon),(\tau_h W_{\epsilon} -W_\epsilon)}\varphi\, d t\, d x  \\
= \frac{1}{\epsilon}\int_0^{T-h} \int_{\mathbb{R}^n} \Braket{P_K(\tau_h W_{\epsilon} )-P_K(W_\epsilon)-(\tau_h W_{\epsilon} -W_\epsilon),\tau_h W_{\epsilon} -W_\epsilon}\varphi\, d t \, d x\leq 0,
\end{multline*}
\com{since the projection is $1$-Lipschitz. Arguing as before, we obtain}
\begin{multline*}
 \int_0^{T-h} \int_{\mathbb{R}^n} |\tau_h W_{\epsilon}-W_\epsilon|^2\partial_t\varphi\, d t\, d x \\
+ \int_0^{T-h} \int_{\mathbb{R}^n}  \sum_{i=1}^n\Braket{\tau_h W_{\epsilon}-W_\epsilon,B_i (\tau_h W_{\epsilon}-W_\epsilon)}\partial_{x_i}\varphi\, d t\, d x \\
 + \int_{\mathbb{R}^n} |W_{\epsilon}(h,x)-W^0(x)|^2 \varphi(0,x)\, d x \ge 0.
\end{multline*}

\noindent Using a similar test function 
\[
\varphi(t,x) = \left\{
\begin{array}{ll}
\frac{T-h-t}{T-h}+\frac{r-|x|}{nL(T-h)}, &\text{if } t\in [0,T-h]\text{ and } r \le |x| \le r+nL(T-t), \\
\frac{T-h-t}{T-h}, &\text{if } t\in [0,T-h]\text{ and } x\in B(0,r), \\
0, & \text{otherwise},
\end{array}
\right.
\] 
we get that for a.e.  $(t,x) \in [0,T-h]\times \mathbb{R}^n$,
\[
\left( |\tau_h W_{\epsilon}-W_\epsilon|^2\partial_t\varphi +\sum_{i=1}^n \Braket{\tau_h W_{\epsilon}-W_\epsilon,B_i (\tau_h W_{\epsilon}-W_\epsilon)}\partial_{x_i}\varphi \right)(t,x)\le 0, 
\]
and then
\begin{multline}
\label{translatee_en_temps}
 \int_0^{T-h} \int_{B(0,r)} |\tau_h W_{\epsilon}-W_{\epsilon}|^2\, d x\, d t \\
\le (T-h)\int_{B(0,r+nL(T-h))} |W_\epsilon(h,x)-W^0(x)|^2\varphi(0,x)\, d x \\
\le T\int_{B(0,r+nLT)} |W_\epsilon(h,x)-W^0(x)|^2\, d x.
\end{multline}
The conclusion then follows from the following result whose proof is very close to that of \cite[Proposition 7]{DLS}.
\begin{lemma}
For all $\xi \in \mathcal{C}_c^{\infty}(\mathbb{R}^n,\mathbb{R}^+)$, one has
\[
\lim_{h\to 0}\sup_{\epsilon>0} \int_{\mathbb{R}^n} |W_\epsilon(h,x)-W^0(x)|^2\xi(x)\, d x = 0.
\]
\end{lemma}

According to Theorem~\ref{theo_simons} and estimate \ref{estim_Weps_LinfL2}, the sequence $(W_\epsilon)_{\epsilon>0}$ admits a subsequence (not relabeled) which  converges strongly in $L^2([0,T];L^2(\omega,\mathbb{R}^m))$ to some $\tilde{W}$ and weakly in $L^2([0,T];L^2(\mathbb{R}^n,\mathbb{R}^m))$ to some $W \in L^2([0,T];L^2(\mathbb{R}^n,\mathbb{R}^m))$. By uniqueness of the limit, we infer that $\tilde{W}=W \in L^2([0,T];L^2(\mathbb{R}^n,\mathbb{R}^m)$. Let us take a test function $\phi \in \mathcal C^\infty_c(]0,T[ \times \mathbb R^n)$ in \ref{cons}. Multiplying this inequality by $\epsilon$ and passing to the limit as $\epsilon \to 0$ yields $W=P_K(W)$ a.e. in $]0,T[ \times \mathbb R^n$ which shows that $W \in L^2([0,T];L^2(\mathbb{R}^n,K))$. Finally, passing to the limit as $\epsilon\to 0$ in \ref{inequa_Weps} shows that $W$ is a solution in the sense of Definition~\ref{defsolsystnonborn} to the problem~\ref{SystNonBorn}. Note finally that, by uniqueness of the solution to \ref{SystNonBorn} (see \cite[Lemma 9]{DLS}), there is no need to extract a subsequence to get the above convergences as $\epsilon \to 0$.
\end{proof}

The construction of the solution $W$ to \ref{SystNonBorn} rests on the assumption that the initial data $W_0 \in H^1(\mathbb{R}^n,K)$.
Let us now explain how to construct a solution $W$ when $W_0$ only belongs to $L^2(\mathbb{R}^n,K)$. We use here the following result whose proof can be found in~\cite{DLS}
\begin{theorem}
\label{regularite_systnonborn}
Let $W^0$ and $\tilde{W^0}\in H^1(\mathbb{R}^n,K)$. We denote by $W$ (resp. $\tilde{W}$)  the solution in $L^2(0,T;L^2(\mathbb{R}^n,K))$ to problem~\ref{SystNonBorn} in the sense of Definition~\ref{defsolsystnonborn} associated with $W^0$ (resp. $\tilde{W^0}$). Then, $W$ and $\tilde W$ belong to $\mathcal{C}([0,T];L^2(\mathbb{R}^n,K))$, and, in addition, we have the following estimate
\[
\forall t\in [0,T],\forall r>0,\quad \left\| W(t,\cdot)-\tilde{W}(t,\cdot)\right\|_{L^2(B(0,r))}\le \left\| W^0-\tilde{W^0}\right\|_{L^2(B(0,r+nLT))},
\]
where $L$ is the maximum of the spectral radii of the matrices $B_i$.
\end{theorem}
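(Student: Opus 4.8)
The plan is to establish a \emph{Kato-type comparison inequality} between the two solutions and then to insert a backward-cone test function, exactly as was done for the relaxed solutions $W_\epsilon$ in the previous section (see \ref{ineq_diff_esp}--\ref{translatee_en_temps}). The only genuinely new ingredient is that Definition~\ref{defsolsystnonborn} provides \emph{inequalities} and not an equation, so one cannot simply differentiate $t \mapsto \|W(t)-\tilde W(t)\|_{L^2_x}^2$ as was done for $W_\epsilon$; instead one must double the time and space variables in the spirit of Kru\v zkov.

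First I would carry out the doubling of variables. Fix a nonnegative $\psi \in \mathcal C^\infty_c([0,T[\times\mathbb R^n)$ and mollifiers $\rho_\delta$ (in one variable) and $\rho_{\delta'}$ (in $n$ variables), and set $\phi(t,x,s,y)=\psi\!\left(\tfrac{t+s}{2},\tfrac{x+y}{2}\right)\rho_\delta(t-s)\rho_{\delta'}(x-y)$. Since $\tilde W(s,y)\in K$ for a.e.\ $(s,y)$, one may take $\kappa=\tilde W(s,y)$ in~\ref{inequadefsolsystnonborn} written for $W$, with test function $\phi(\cdot,\cdot,s,y)$, and integrate over $(s,y)$; symmetrically, take $\kappa=W(t,x)$ in the inequality for $\tilde W$ with test function $\phi(t,x,\cdot,\cdot)$ and integrate over $(t,x)$. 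Adding the two and using $\partial_t\phi+\partial_s\phi=(\partial_t\psi)\rho_\delta\rho_{\delta'}$ and $\partial_{x_j}\phi+\partial_{y_j}\phi=(\partial_{x_j}\psi)\rho_\delta\rho_{\delta'}$ up to lower-order terms, one lets $\delta,\delta'\to0$; the Lebesgue differentiation theorem, using only $W,\tilde W,W^0,\tilde W^0\in L^2$, collapses the mollified quantities onto the diagonal and yields, for every nonnegative Lipschitz $\psi$ with compact support in $[0,T[\times\mathbb R^n$,
\[
\int_0^T\!\!\int_{\mathbb R^n}\!\Big(|W-\tilde W|^2\partial_t\psi+\sum_{j=1}^n\Braket{W-\tilde W;B_j(W-\tilde W)}\partial_{x_j}\psi\Big)\,dt\,dx+\int_{\mathbb R^n}|W^0-\tilde W^0|^2\psi(0,\cdot)\,dx\ge0.
\]
This is precisely inequality~\ref{ineq_diff_esp}, now valid for constrained solutions.

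Next I would insert the backward truncated cone. Given $t_0\in\,]0,T]$ and $r>0$, take $\psi$ close to the indicator of $\mathcal C_{t_0,r}=\{(t,x):0\le t\le t_0,\ |x|\le r+nL(t_0-t)\}$, regularized over a width $\sigma$ that is then sent to $0$: e.g.\ $\psi(t,x)=\zeta(t)\,\max\!\big(0,\min(1,(r+nL(t_0-t)-|x|)/\sigma)\big)$ with $\zeta$ a smooth approximation of $\mathbf 1_{[0,t_0]}$. On the lateral strip the radial derivative $g'$ of the spatial factor is $\le0$, with $\partial_t(\cdot)=nL\,g'$ and $\nabla_x(\cdot)=\tfrac{x}{|x|}g'$, so the integrand there equals $\zeta\,g'\big(nL|W-\tilde W|^2+\Braket{W-\tilde W;(\sum_j\tfrac{x_j}{|x|}B_j)(W-\tilde W)}\big)$, which is $\le0$ because the symmetric matrix $\sum_j\tfrac{x_j}{|x|}B_j$ has norm at most $nL$; this lateral contribution may therefore be discarded. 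What remains from $\partial_t\psi$ is the ``top'' term $-\int_{B(0,r)}|W(t_0)-\tilde W(t_0)|^2\,dx$ at Lebesgue points $t_0$ of $t\mapsto W(t),\tilde W(t)\in L^2_{\rm loc}$, while $\psi(0,\cdot)\to\mathbf 1_{B(0,r+nLt_0)}$. Hence for a.e.\ $t_0$,
\[
\int_{B(0,r)}|W(t_0)-\tilde W(t_0)|^2\,dx\le\int_{B(0,r+nLt_0)}|W^0-\tilde W^0|^2\,dx .
\]

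Finally I would upgrade this to every $t$ and deduce continuity. Applying the estimate to $W$ and its time translate $W(\cdot+\tau,\cdot)$ — which is again a constrained solution, with initial datum $W(\tau,\cdot)$ (valid for a.e.\ $\tau$) — gives $\|W(t_0+\tau)-W(t_0)\|_{L^2(B(0,r))}\le\|W(\tau)-W^0\|_{L^2(B(0,r+nLt_0))}$; since $\|W(\tau)-W^0\|_{L^2_{\rm loc}}\to0$ as $\tau\to0^+$ (the passage to the limit $\epsilon\to0$ in the last Lemma of the previous section gives exactly this), $W$ has a right-continuous representative, and a symmetric argument gives continuity, so $W,\tilde W\in\mathcal C([0,T];L^2_{\rm loc})$; combining with the uniform $L^\infty_tL^2_x$ bound \ref{estim_Weps_LinfL2} and a tail estimate (again from finite speed of propagation) yields $\mathcal C([0,T];L^2(\mathbb R^n))$. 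Once $W,\tilde W$ are genuinely continuous in time, the displayed estimate holds for \emph{every} $t_0$, which is the claim. I expect the main obstacle to be the doubling-of-variables step: one must verify that the mollified cross-terms converge to the correct diagonal quantities using only $L^2$ integrability, and arrange the kernel so that the two initial contributions combine into the single term $|W^0-\tilde W^0|^2\psi(0,\cdot)$; the subsequent bootstrap between the ``a.e.\ $t_0$'' estimate and continuity in time is comparatively routine.
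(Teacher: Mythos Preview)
The paper does not actually prove Theorem~\ref{regularite_systnonborn}: it is quoted as a result ``whose proof can be found in~\cite{DLS}'' and then used as a black box to extend the construction from $H^1$ to $L^2$ initial data. So there is no in-paper proof to compare your proposal against.

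That said, your sketch is precisely the standard route and is, in substance, what \cite{DLS} does: Kru\v zkov doubling of variables to turn the two entropy-type inequalities~\ref{inequadefsolsystnonborn} into a single Kato inequality for $|W-\tilde W|^2$, followed by the same backward-cone test function already used in this paper for $W_\epsilon$ (your lateral-boundary estimate, using that $\big\|\sum_j \tfrac{x_j}{|x|}B_j\big\|\le nL$, matches the argument around~\ref{ineq_diff_esp}). The bootstrap from ``a.e.\ $t_0$'' to every $t_0$ via time translates and right-continuity at $t=0$ is likewise the usual mechanism.

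The one place where your outline is genuinely thin is the handling of the two initial terms in the doubling step. With a symmetric kernel $\rho_\delta(t-s)$, the contributions at $t=0$ and $s=0$ do \emph{not} automatically combine into $\int |W^0-\tilde W^0|^2\psi(0,\cdot)$: one picks up instead a cross term of the type $\int |W^0(x)-\tilde W(s,y)|^2\cdots$ (and its symmetric), and one needs either an asymmetric choice of kernel (e.g.\ $\rho_\delta$ supported in $(0,\delta)$) together with some information on the trace of $W,\tilde W$ at $t=0^+$, or a preliminary weak time-continuity statement, to close the argument. You flag this as ``the main obstacle'', which is accurate; it is resolvable along the lines of~\cite{DLS}, but it is not a triviality and deserves an explicit argument rather than a parenthetical.
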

By mollification, let us construct a sequence $(W^0_k)_{k\in \mathbb{N}}$ such that $W^0_k \in H^1(\mathbb{R}^n,K)$ for all $k\in \mathbb{N}$, which converges to $W^0$ in $L^2(\mathbb{R}^n,K)$. The estimates of Theorem~\ref{regularite_systnonborn} imply that
\begin{equation}
\label{comparaison_cauchy}
\sup_{t\in [0,T]} \left\| W_k(t,\cdot)-W_l(t,\cdot) \right\|_{L^2(\mathbb{R}^n)} \le \left\| W^0_k-W^0_l\right\|_{L^2(\mathbb{R}^n)},
\end{equation}
where $W_k$ (resp. $W_l$) is the solution to ~\ref{SystNonBorn} associated with the initial condition $W^0_k$ (resp. $W^0_l$). It follows that the sequence $(W_k)_{k \in \mathbb N}$ is of Cauchy type in $L^{\infty}(0,T;L^2(\mathbb{R}^n,\mathbb{R}^m))$, and therefore it converges strongly in $L^{\infty}(0,T;L^2(\mathbb{R}^n,\mathbb{R}^m))$ to some function $W\in L^{\infty}(0,T;L^2(\mathbb{R}^n,\mathbb{R}^m))$. Thanks to the strong convergence, we find that $W$ satisfies the inequality~\ref{inequadefsolsystnonborn}.  In addition, since  $W_k=P_K(W_k)$ for all $k \in \mathbb N$,  we deduce that $W=P_K(W)$ which ensures that $W \in L^{\infty}(0,T;L^2(\mathbb{R}^n,K))$. The following result has thus been established.
\begin{theorem}
Let $W^0 \in L^2(\mathbb{R}^n,K)$, then there exists a unique solution $W\in L^\infty(0,T,L^2(\mathbb{R}^n,K))$ to \ref{SystNonBorn} in the sense of Definition~\ref{defsolsystnonborn}.
\end{theorem}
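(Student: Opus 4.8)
The plan is to bootstrap from the $H^1$ theory already obtained to the case of a merely $L^2$ datum by a density-plus-stability argument; in fact the running computation just before the statement already carries this out, and I would simply organize it as follows. First, mollify: given $W^0\in L^2(\mathbb{R}^n,K)$, set $W^0_k:=W^0*\rho_{1/k}$. Since $K$ is closed and convex and the mollifiers are nonnegative with unit mass, Jensen's inequality forces $W^0_k(x)\in K$ for a.e.\ $x$, so $W^0_k\in H^1(\mathbb{R}^n,K)$, and $W^0_k\to W^0$ in $L^2(\mathbb{R}^n,\mathbb{R}^m)$. For each $k$, the results above produce a weak constrained solution $W_k\in\mathcal{C}([0,T];L^2(\mathbb{R}^n,K))$ to \ref{SystNonBorn} with datum $W^0_k$.

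The key step is the finite-speed comparison estimate of Theorem~\ref{regularite_systnonborn} (taken from \cite{DLS}): applied to $W_k$ and $W_l$ it gives, for every $t\in[0,T]$, $\|W_k(t,\cdot)-W_l(t,\cdot)\|_{L^2(\mathbb{R}^n)}\le\|W^0_k-W^0_l\|_{L^2(\mathbb{R}^n)}$, so $(W_k)_k$ is Cauchy in $L^\infty(0,T;L^2(\mathbb{R}^n,\mathbb{R}^m))$ and converges strongly there to some $W$. Passing to the limit in the inequality \ref{inequadefsolsystnonborn} written for $W_k$ is then immediate, the integrands being quadratic in $W_k$ and the convergence strong in $L^2$; and from $W_k=P_K(W_k)$ together with the $1$-Lipschitz continuity of $P_K$ one gets $W=P_K(W)$ a.e., i.e.\ $W(t,x)\in K$. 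Hence $W\in L^\infty(0,T;L^2(\mathbb{R}^n,K))$ is a weak constrained solution.

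For uniqueness I would invoke the statement already recalled in the introduction, which is exactly \cite[Lemma 9]{DLS}: it applies to any two weak constrained solutions in $L^2([0,T]\times\mathbb{R}^n,K)$ sharing the same initial datum, with no extra regularity needed, its proof resting on a Kruzhkov-type doubling of variables in the inequality of Definition~\ref{defsolsystnonborn}. This identifies the $W$ constructed above as the unique solution, and also shows a posteriori that no subsequence extraction is needed in the mollification limit. I expect no genuine obstacle here: the only point to be careful about is that the comparison estimate of Theorem~\ref{regularite_systnonborn} is quoted with the correct constants and domains of dependence, and that the mollification indeed respects the constraint $K$; alternatively one could re-prove the $L^2$ comparison estimate directly by the same doubling argument and dispense with citing uniqueness from \cite{DLS}, but this would only lengthen the exposition.
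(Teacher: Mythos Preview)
Your proposal is correct and follows essentially the same approach as the paper: mollify the datum to land in $H^1(\mathbb{R}^n,K)$, use the comparison estimate of Theorem~\ref{regularite_systnonborn} to obtain a Cauchy sequence in $L^\infty(0,T;L^2)$, pass to the strong limit in the entropy inequalities, and invoke \cite{DLS} for uniqueness. You add the Jensen-inequality justification that mollification preserves the constraint $K$ and make the uniqueness citation explicit, both of which the paper leaves implicit, but the strategy is identical.
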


\section{Conclusion}

In definitive, the relaxed problem~\ref{ApproxSystNonBorn}, that was
used in~\cite{DLS} to derive formally a definition of weak solutions
of hyperbolic constrained problems, is in fact a rigorous way to
construct weak solutions of hyperbolic constrained problems. It is
worth noting that this relaxation procedure is deeply related to
viscoplastic models. In order to fully apply this theory to mechanical
problems, one should consider problems that are posed in bounded
spatial domain. To do so, a new formulation of weak solutions to
Friedrichs' systems posed in bounded domains is proposed in~\cite{MDS},
without constraints. It remains now to investigate the interactions
between the boundary conditions which are considered in~\cite{MDS} and
the convex constraints.

\section*{Acknowledgments}
The authors wish to express their thanks to Bruno Despr{\'e}s for many
stimulating conversations. The first and the third authors would also
mention that they have benefited from the unrivaled working atmosphere
of the 2D24.



\medskip
Received xxxx 20xx; revised xxxx 20xx.
\medskip


\begin{thebibliography}{99}

%
%
%
%
%
%
%
%
%
%
%

     
\bibitem{Adams} (MR2424078)
	\newblock R. A. Adams and J. J. F. Fournier,
	\newblock \emph{Sobolev Spaces},
	\newblock 2$^{nd}$ edition, Elsevier, Amsterdam, 2003.
	
\bibitem{Brezis} (MR0697382)
	\newblock H. Br{\'e}zis,
	\newblock \emph{Analyse fonctionnelle}
	\newblock Masson, Paris, 1983.
	
\bibitem{DLS} (MR2844827)     
     \newblock B. Despr{\'e}s, F. Lagouti{\`e}re and N. Seguin,
     \newblock Weak solutions to Friedrichs systems with convex constraints,
     \newblock \emph{Nonlinearity}, \textbf{24}, (2011), 3055--3081.
	
\bibitem{Evans} (MR2597943)
     \newblock L. C. Evans,
     \newblock \emph{Partial Differential Equations},
     \newblock 2$^{nd}$ edition,  American mathematical society, Providence, 2010.

\bibitem{Fried} (MR0100718)
     \newblock K. O. Friedrichs,
     \newblock Symmetric positive linear differential equations, 
     \newblock \emph{Comm. Pure Appl. Math.}, \textbf{11}, (1958), 333?418.
     	
\bibitem{SerreMorando3D} (MR2188685)
	\newblock A. Morando and D. Serre,
	\newblock On the $L^2$-well posedness of an initial boundary value problem for the 3D linear elasticity,
	\newblock \emph{Commun. Math. Sci.}, \textbf{3}, 4, (2005), 575--586.
	
\bibitem{MDS}
	\newblock C. Mifsud, B. Despr{\'e}s and N. Seguin,
	\newblock Dissipative formulation of initial boundary value problems for Friedrichs' systems,
	\newblock 2014. <hal-01074542>

\bibitem{Moreau} (MR0201952)
	\newblock J-J. Moreau,
	\newblock Proximit{\'e} et dualit{\'e} dans un espace hilbertien,
	\newblock \emph{Bull. Soc. Math. France}, \textbf{93}, (1965), 273--299.

\bibitem{NouriRascle} (MR1338365)
	\newblock A. Nouri and M. Rascle,
	\newblock A global existence and uniqueness theorem for a model problem in dynamic elastoplasticity with isotropic strain-hardening,
	\newblock \emph{SIAM J. Math. Anal.}, \textbf{26}, 4, (1995), 850--868.
		     
\bibitem{Simon} (MR0916688)
	\newblock J. Simon,
	\newblock Compact Sets in the Space $L^p(0,T,B)$,
	\newblock \emph{Annali Mat. Pura Appl.}, \textbf{CXLVI}, (1987), 65--96.
	
\bibitem{SuquetDyn} (MR0614549)
	\newblock P-M. Suquet,
	\newblock Evolution problems for a class of dissipative materials,
	\newblock \emph{Quart. Appl. Math.}, \textbf{38}, 4, (1980), 391--414.

\bibitem{Suquet} (MR0614549)
	\newblock P-M. Suquet,
	\newblock Sur les {\'e}quations de la plasticit{\'e}: existence et r{\'e}gularit{\'e} des solutions,
	\newblock \emph{J. MŽcanique}, \textbf{20}, 1, (1981), 3--39.

\end{thebibliography}
\end{document}